\newtheorem{thm}{Theorem}
\newtheorem{lem}{Lemma}
\newtheorem{cor}{Corollary}
\theoremstyle{remark}
\newtheorem{rem}{Remark}
\theoremstyle{definition}
\newtheorem{definition}{Definition}
\newtheoremstyle{notes}
{3pt}
{3pt}
{}
{}
{\bfseries}
{:}
{.4em}
{}
\theoremstyle{notes}
\newtheorem*{keywords}{Keywords}
\newtheorem*{subjclass}{AMS MSC 2020}
\newcommand{\D}[1]{\mathop{\mathrm{d}#1}}
\title{Improved upper bounds for the Hot Spots constant of Lipschitz domains}
\author{
Phanuel Mariano\footnote{Supported at Union College in part by an AMS-Simons Travel Grant 2019-2022.}\\
\href{mailto:marianop@union.edu}{\texttt{{\small marianop@union.edu}}}
\and
Hugo Panzo\footnote{Supported at the Technion by a Zuckerman Fellowship.}\\ 
\href{mailto:panzo@campus.technion.ac.il}{\texttt{{\small panzo@campus.technion.ac.il}}}
\and
Jing Wang\footnote{Supported at Purdue University in part by NSF Grant DMS-1855523.}\\
\href{mailto:jingwang@purdue.edu}{\texttt{{\small jingwang@purdue.edu}}}
}
\date{\today}
\begin{document}

\maketitle

\begin{abstract}
The Hot Spots constant for bounded smooth domains was recently introduced by Steinerberger \cite{Steinerberger-2021a} as a means to control the global extrema of the first nontrivial eigenfunction of the Neumann Laplacian by its boundary extrema. We generalize the Hot Spots constant to bounded Lipschitz domains and show that it leads to an if and only if condition for the weak Hot Spots conjecture HS2 of \cite{Banuelos-Burdzy1999}. We also derive a new general formula for a dimension-dependent upper bound that can be tailored to any specific class of domains. This formula is then used to compute upper bounds for the Hot Spots constant of the class of all bounded Lipschitz domains in $\mathbb{R}^d$ for both small $d$ and for asymptotically large $d$ that significantly improve upon the existing results.
\end{abstract}

\begin{keywords}
Hot Spots conjecture, Neumann problem, shape functional, exit time. 
\end{keywords}

\begin{subjclass}
Primary 35B50, 60J65; Secondary 35J25, 35P15, 49Q10.
\end{subjclass}


\section{Introduction}

\subsection{Hot Spots conjecture}

The Hot Spots conjecture is a famous question regarding the location of the
extrema of eigenfunctions corresponding to the first nonzero eigenvalue of the Neumann Laplacian on a bounded Euclidean domain $D$ (nonempty connected open set) with sufficiently regular boundary. Loosely speaking, the conjecture asserts that the maximum and minimum of such eigenfunctions --- the hot and cold spots --- are attained on the boundary $\partial D$ and not inside $D$. It was first proposed by J. Rauch during a special program in partial differential equations and related topics held at Tulane University in the spring of 1974; see \cite{Rauch}. 

Before describing the conjecture in more detail, we recall some basic facts about Neumann Laplacian eigenvalues and eigenfunctions; see Section \ref{sec:eigenvalues} for more details. Consider a bounded domain $D\subset\mathbb{R}^d$ with Lipschitz boundary (Lipschitz domain) and let $\Delta$ be the Laplacian acting in $L^2(D)$ with Neumann boundary conditions. In this case it is well known that $-\Delta$ has a discrete spectrum, that is, there exists a sequence of eigenvalues $0=\mu_1<\mu_2\leq\mu_3\leq\dots$ along with an $L^2(D)$ orthonormal basis $\{1/\sqrt{|D|}\equiv\varphi_1,\varphi_2,\varphi_3,\dots\}$ of eigenfunctions which satisfy
\begin{equation}\label{eq:eigenfunction}
\left\{
\begin{aligned}
-\Delta \varphi_j&=\mu_j \varphi_j & &\text{in } D\\
\frac{\partial \varphi_j}{\partial \nu}&=0 & &\text{on }\partial D
\end{aligned}
\right.
\end{equation}
for each $j\in\mathbb{N}$. Here $\nu$ denotes the outward unit normal and the boundary condition is understood to hold at almost every $x\in\partial D$ with respect to $(d-1)$-dimensional surface measure. We will occasionally have to deal with eigenvalues of the Dirichlet Laplacian and these will be denoted by $\lambda_j$. In case of possible ambiguity, we specify the domain by writing $\mu_j(D)$ or $\lambda_j(D)$.

While it follows immediately from the hypoellipticity of the Laplacian that solutions of \eqref{eq:eigenfunction} are in $C^\infty(D)$, it turns out that they are also H\"{o}lder continuous; see \cite[Proposition 3.6]{Nittka}. This implies that each eigenfunction can be extended uniquely to a function in $C^0(\overline{D})$. In particular, each $\varphi_j$ can be treated as a function defined pointwise on $\overline{D}$.

Returning now to the conjecture, we will actually be considering a slightly weaker version, namely HS2 of \cite{Banuelos-Burdzy1999}. Like the original conjecture, HS2 asserts that the maximum and minimum of eigenfunctions corresponding to $\mu_2$ are attained on $\partial D$, but this version does not rule out the possibility that global extrema may also occur inside $D$. More precisely, we have the following definition.

\begin{definition}\label{def:HS2}
We say that \textbf{HS2} holds for a bounded Lipschitz domain $D$ if for every Neumann Laplacian eigenfunction $\varphi_2$ corresponding to $\mu_2$ and all $y\in D$, we have 
\[
\inf_{x\in\partial D}\varphi_2(x)\leq \varphi_2(y)\leq \sup_{x\in\partial D}\varphi_2(x).
\]
Similarly, we say that HS2 holds for a class $\mathcal{D}$ of bounded Lipschitz domains if HS2 holds for every domain $D\in\mathcal{D}$.
\end{definition}

There are many cases where the Hot Spots conjecture, and by implication HS2, are known to be true. Any bounded domain in $\mathbb{R}$ is a finite interval and it is a trivial matter to check that the Hot Spots conjecture holds. As for higher dimensions, besides parallelepipeds, balls, and annuli \cite{Kawohl}, some notable examples are triangles \cite{triangles}, convex planar domains with two axes of symmetry \cite{Jerison}, lip domains \cite{lip_domains}, and affine Weyl group alcoves in $\mathbb{R}^d$ \cite{alcoves}; see also \cite{Pascu,BP_Pascu,Siudeja}. More generally, HS2 has been shown to hold for bounded cylinders in $\mathbb{R}^d$ whose cross sections can be arbitrary Lipschitz domains in $\mathbb{R}^{d-1}$; see \cite[Corollary 2.15]{Kawohl}. However, several planar counterexamples to HS2 have been constructed; see \cite{Burdzy-Werner1999, fiber_BM, Burdzy2005, Kleefeld}. All of these counterexamples feature multiply connected domains and it is widely believed that HS2 and the Hot Spots conjecture hold on arbitrary convex domains in $\mathbb{R}^d$.


\subsection{Hot Spots constant}\label{sec:HS_constant}

A novel approach to proving that HS2 holds for a domain or a class of domains is through a \emph{Hot Spots constant}, an idea recently introduced by Steinerberger in \cite{Steinerberger-2021a}; see also \cite{Kleefeld}. The basic idea for a fixed domain $D$ is to examine the quotient $\sup_{x\in D}\varphi_2(x)/\sup_{x\in \partial D}\varphi_2(x)$ for a Neumann Laplacian eigenfunction $\varphi_2$ corresponding to $\mu_2$. Then the supremum of this quotient over all $\varphi_2$ is called the Hot Spots constant of $D$. If we can bound this constant from above by $1$, then HS2 holds for $D$. 

Steinerberger worked exclusively with smooth domains but remarked that this assumption could be relaxed. Here we give a precise definition of the Hot Spots constant for Lipschitz domains and show that it is well-defined; see \cite{Bass} for a definition of Lipschitz domain. Suppose $D$ is a bounded Lipschitz domain and let $\varphi_2$ be a Neumann Laplacian eigenfunction corresponding to $\mu_2$. Then we claim that $\varphi_2$ takes positive and negative values on both $D$ and $\partial D$. The claim for $D$ is an immediate consequence of the orthogonality of $\varphi_2$ and the constant eigenfunction $\varphi_1$, while the claim for $\partial D$ follows from an argument of Pleijel \cite{Pleijel}; see Lemma \ref{lem:Pleijel} below for a precise statement and proof. Since $\varphi_2$ is continuous on $\overline{D}$, these claims imply that both
\begin{equation}\label{eq:well_ratio}
1\leq\frac{\displaystyle\sup_{x\in D}\varphi_2(x)}{\displaystyle\sup_{x\in\partial D}\varphi_2(x)}<\infty~~~~\text{ and }~~~~1\leq\frac{\displaystyle\inf_{x\in D}\varphi_2(x)}{\displaystyle\inf_{x\in\partial D}\varphi_2(x)}<\infty
\end{equation}
must hold. With this in mind, we define the Hot Spots constant of $D$ by 
\begin{equation}\label{eq:domain_constant}
\mathfrak{C}(D):=\sup\left\{\frac{\displaystyle\sup_{x\in D}\varphi_2(x)}{\displaystyle\sup_{x\in\partial D}\varphi_2(x)}\middle|~\parbox{5.5cm}{$\varphi_2$ is a Neumann Laplacian eigenfunction corresponding to $\mu_2$} \right\}.
\end{equation}

\begin{rem}\label{rem:sup_inf}
If $\varphi_2$ is an eigenfunction corresponding to $\mu_2$, then so is $-\varphi_2$. Hence changing both of the inner supremums in \eqref{eq:domain_constant} to infimums over the same sets yields an equivalent definition of $\mathfrak{C}(D)$. 
\end{rem}
 
\begin{rem}\label{rem:two_sided}
Suppose that we replace the quotient of supremums of $\varphi_2$ in \eqref{eq:domain_constant} with the quotient of supremums of $|\varphi_2|$ and let this define an alternative Hot Spots constant denoted by $\widetilde{\mathfrak{C}}(D)$. \emph{A priori}, these definitions \emph{are not} equivalent. For instance, it is conceivable that there exists a bounded Lipschitz domain $D$ where $\varphi_2$ has the following properties: it corresponds to a simple eigenvalue, attains its maximum strictly inside $D$, attains its minimum on the boundary $\partial D$, and has $|\inf_{x\in \partial D}\varphi_2(x)|\geq \sup_{x\in D}\varphi_2(x)$. This would imply that $\mathfrak{C}(D)>\widetilde{\mathfrak{C}}(D)=1$.
\end{rem}

The idea of a Hot Spots constant can also be applied to a class of domains. More precisely, suppose that $\mathcal{D}$ is a class of bounded Lipschitz domains. Then with a slight abuse of notation, we define the Hot Spots constant of this class by
\begin{equation}\label{eq:class_constant}
\mathfrak{C}(\mathcal{D}):=\sup_{D\in\mathcal{D}}\mathfrak{C}(D).
\end{equation}
In case $\mathcal{D}$ is the class of all bounded Lipschitz domains in $\mathbb{R}^d$, we simply write $\mathfrak{C}_d$ for $\mathfrak{C}(\mathcal{D})$ and refer to this as the $d$-dimensional Hot Spots constant. More specifically, we have
\begin{equation}\label{eq:d_constant}
\mathfrak{C}_d:=\sup\left\{\mathfrak{C}(D)\middle|~D\subset\mathbb{R}^d\text{ is a bounded Lipschitz domain}\right\}.
\end{equation}

Interest in the Hot Spots constant goes beyond HS2. To see why this is so, let $s>0$ be a scaling parameter. It follows from \eqref{eq:eigenfunction} that if $\mu_2$ and $\varphi_2(x)$ correspond to a domain $D\subset\mathbb{R}^d$, then $\frac{1}{s^2}\mu_2$ and $\frac{1}{s^{d/2}}\varphi_2(\frac{x}{s})$ are the analogous eigenvalue and eigenfunction which correspond to $D$ scaled by $s$. Thus the Hot Spots constant is scale invariant and the mapping $D\mapsto\mathfrak{C}(D)$ is an example of a \emph{shape functional}. This raises the question of identifying the maximal domains, if any, which maximize $\mathfrak{C}$ over the class of bounded Lipschitz domains in $\mathbb{R}^d$. In other words, are there domains $D\subset\mathbb{R}^d$ such that $\mathfrak{C}(D)=\mathfrak{C}_d$? If so, what do they look like? No less interesting is the characterization of the minimal domains where $\mathfrak{C}(D)=1$, as these are precisely the domains where HS2 holds. Refer to the monographs \cite{Henrot, shape_optimization} and references therein for more results on shape functionals and related extremal problems.

Not much is known about the $d$-dimensional Hot Spots constant or the existence of maximal domains beyond the trivial case $d=1$. Here we know that $\mathfrak{C}_1=1$, so every bounded domain is both maximal and minimal. We also know from any of the counterexamples mentioned above that $\mathfrak{C}_2>1$. Indeed, Kleefeld \cite{Kleefeld} uses superconvergent numerical methods to show that $\mathfrak{C}_2\geq 1+10^{-3}$, and he hints that this approach may also be applicable in $\mathbb{R}^3$. On the other hand, bounding $\mathfrak{C}_d$ from above is the main topic of the present paper. 

As the previous two paragraphs demonstrate, finding accurate estimates of $\mathfrak{C}_d$ is a compelling question, and in the words of Steinerberger \cite{Steinerberger-2021a}, this is tantamount to asking: 
\begin{displayquote}
\texttt{How `wrong' can the Hot Spots conjecture be?}
\end{displayquote}
In the case of smooth domains, Steinerberger shows that the Hot Spots conjecture cannot fail by an arbitrary factor. More precisely, let $\mathcal{S}_d$ denote the class of all bounded domains in $\mathbb{R}^d$ with smooth boundary. Then for all $d\in \mathbb{N}$ we have
\begin{equation}\label{eq:Steinerberger_60}
\mathfrak{C}(\mathcal{S}_d)\leq 58.35.
\end{equation}
Moreover, he computes improved bounds for larger $d$ and also shows that asymptotically we have
\begin{equation}\label{eq:Steinerberger_asymptotic}
\limsup_{d\to\infty}\mathfrak{C}(\mathcal{S}_d)\leq\sqrt{e^e}\approx 3.8928.
\end{equation}

\newpage This brings us to the \textbf{main contributions} of our paper:
\begin{enumerate}

\item Formalizing the idea of a Hot Spots constant with a definition \eqref{eq:domain_constant} that is applicable to bounded Lipschitz domains and that leads to an if and only if condition for HS2; see Theorem \ref{thm:iff_condition} and compare with Remarks \ref{rem:two_sided} and \ref{rem:iff_condition}.

\item Deriving a new general formula for a dimension-dependent upper bound for the Hot Spots constant that can be tailored to any specific class of bounded Lipschitz domains; see Theorem \ref{thm:main}. 

\item Establishing upper bounds for the $d$-dimensional Hot Spots constant $\mathfrak{C}_d$ for both small values of $d$ and for asymptotically large $d$ that significantly improve upon the results of \cite{Steinerberger-2021a}; see Corollary \ref{cor:table} and Theorem \ref{thm:asymptotic_bound}.

\end{enumerate}

The rest of the paper is organized as follows. Our main results are formally stated in Section \ref{sec:main_results}. We collect some useful facts about Neumann Laplacian eigenvalues and eigenfunctions in Section \ref{sec:eigenvalues}. In Section \ref{sec:RBM}, we review how Neumann Laplacian eigenfunctions can be studied using reflecting Brownian motion and also prove a key preliminary result. Finally, we prove our main results in Section \ref{sec:main_proofs}.


\section{Main results}\label{sec:main_results}

Our first main result is an if and only if condition for HS2. Recall that definitions of the relevant Hot Spots constants are given in \eqref{eq:domain_constant} and \eqref{eq:class_constant}, while the statement of HS2 is given in Definition \ref{def:HS2}.

\begin{thm}\label{thm:iff_condition}
Suppose that $D$ is a bounded Lipschitz domain and that $\mathcal{D}$ is a class of such domains. Then HS2 holds for $D$ (respectively $\mathcal{D}$) if and only if $\mathfrak{C}(D)\leq 1$ (respectively $\mathfrak{C}(\mathcal{D})\leq 1$).
\end{thm}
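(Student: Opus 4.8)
The plan is to prove both directions of the equivalence for a single domain $D$; the statement for a class $\mathcal{D}$ then follows immediately by taking suprema in \eqref{eq:class_constant} and using the definition of HS2 for a class. So fix a bounded Lipschitz domain $D$.

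For the direction $\mathfrak{C}(D)\leq 1 \implies$ HS2, suppose $\mathfrak{C}(D)\leq 1$ and let $\varphi_2$ be any eigenfunction corresponding to $\mu_2$. By \eqref{eq:well_ratio} the ratio $\sup_D\varphi_2/\sup_{\partial D}\varphi_2$ is a well-defined real number at least $1$, and by the definition \eqref{eq:domain_constant} it is at most $\mathfrak{C}(D)\leq 1$, hence equals $1$; since $\sup_{\partial D}\varphi_2>0$ this gives $\sup_{x\in D}\varphi_2(x)=\sup_{x\in\partial D}\varphi_2(x)$, which is exactly the upper inequality in Definition \ref{def:HS2} (note $\sup_{x\in D}\varphi_2(x)=\sup_{x\in\overline D}\varphi_2(x)$ by continuity on $\overline D$, so $\varphi_2(y)\leq\sup_{x\in\partial D}\varphi_2(x)$ for all $y\in D$). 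Applying the same argument to $-\varphi_2$, which is again a $\mu_2$-eigenfunction, and using Remark \ref{rem:sup_inf}, yields the lower inequality $\inf_{x\in\partial D}\varphi_2(x)\leq\varphi_2(y)$. Since $\varphi_2$ was arbitrary, HS2 holds for $D$.

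For the contrapositive of the other direction, suppose $\mathfrak{C}(D)>1$. Then by \eqref{eq:domain_constant} there is a $\mu_2$-eigenfunction $\varphi_2$ with $\sup_{x\in D}\varphi_2(x)/\sup_{x\in\partial D}\varphi_2(x)>1$, i.e. $\sup_{x\in D}\varphi_2(x)>\sup_{x\in\partial D}\varphi_2(x)$. Because $\varphi_2$ extends continuously to the compact set $\overline D$, the supremum over $D$ is attained at some $y\in\overline D$; and since its value strictly exceeds $\sup_{\partial D}\varphi_2$, the point $y$ cannot lie on $\partial D$, so $y\in D$. This $y$ violates the upper inequality in Definition \ref{def:HS2}, so HS2 fails for $D$. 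Contrapositively, HS2 for $D$ implies $\mathfrak{C}(D)\leq 1$.

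Finally, for the class statement: if HS2 holds for every $D\in\mathcal D$ then $\mathfrak{C}(D)\leq 1$ for each such $D$ by the above, so $\mathfrak{C}(\mathcal D)=\sup_{D\in\mathcal D}\mathfrak{C}(D)\leq 1$; conversely if $\mathfrak{C}(\mathcal D)\leq 1$ then $\mathfrak{C}(D)\leq 1$ for every $D\in\mathcal D$, hence HS2 holds for each $D$ and therefore for $\mathcal D$. I do not expect any serious obstacle here: the entire argument is bookkeeping with the definitions, the only subtle points being (i) that the relevant ratios are a priori finite and $\geq 1$, which is precisely the content of \eqref{eq:well_ratio} and rests on Lemma \ref{lem:Pleijel} (Pleijel's argument that $\varphi_2$ changes sign on $\partial D$), and (ii) the harmless identification of the supremum over $D$ with that over $\overline D$ via continuity, together with the symmetry $\varphi_2\mapsto-\varphi_2$ noted in Remark \ref{rem:sup_inf} to handle the infimum side.
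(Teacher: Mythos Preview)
Your proof is correct and follows essentially the same approach as the paper: both directions are straightforward unwinding of the definitions \eqref{eq:domain_constant} and Definition~\ref{def:HS2}, using \eqref{eq:well_ratio} (which rests on Lemma~\ref{lem:Pleijel}) to guarantee the ratios are well-defined with positive denominators, and invoking the symmetry $\varphi_2\mapsto-\varphi_2$ from Remark~\ref{rem:sup_inf} for the infimum inequality. The only cosmetic difference is that you prove the implication HS2 $\Rightarrow$ $\mathfrak{C}(D)\leq 1$ by contrapositive whereas the paper argues directly, but the content is identical.
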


\begin{rem}\label{rem:iff_condition}
As the hypothetical example in Remark \ref{rem:two_sided} shows, \emph{a priori}, the alternative Hot Spots constant \emph{does not} provide an analogous if and only if condition for HS2. In other words, there may exist a bounded Lipschitz domain $D$ where HS2 fails to hold, yet $\widetilde{\mathfrak{C}}(D)\leq 1$.
\end{rem}

Before stating our next main result, we need to introduce the concept of a $V$-function and its corresponding $V$-bound. This is a convenient formulation of a recent result of H. Vogt \cite{Vogt2019}. We also need to highlight some facts about the ratio $\frac{\mu_2}{\lambda_1}$. Both of these ideas will feature prominently in the statement and proof of our next result. 

Towards this end, let $D\subset\mathbb{R}^d$ be a domain without any boundedness or boundary regularity assumptions and let $\mathbb{P}_x$ be the law under which $W=(W_t:t\geq 0)$ is $d$-dimensional Brownian motion starting at $x\in D$ and running at twice the usual speed. We define the \emph{first exit time} of $W$ from $D$ by 
\begin{equation}\label{eq:exit_time}
\tau_D:=\inf\{t\geq 0:W_t\notin D\},
\end{equation}
with the usual convention of $\inf\{\emptyset\}=\infty$. 

It is well known that the right tail of $\tau_D$ under $\mathbb{P}_x$ has an exponential rate of decay given by the \emph{principal eigenvalue} $\lambda_D$ of the Dirichlet Laplacian on $D$. More precisely, it follows from \cite[Theorem 3.1.2]{Sznitman} that for any $x\in D$ we have
\[
\lim_{t\to \infty}\frac{1}{t}\log\mathbb{P}_x(\tau_D>t)=-\lambda_D
\]
where
\[
\lambda_D:=\inf_{\substack{f\in C_c^\infty(D)\\ f\not\equiv 0}}\frac{\int_D|\nabla f|^2\D{y}}{\int_D f^2\D{y}}.
\]
Here $C_c^\infty(D)$ is the space of smooth functions on $D$ with compact support. In general, $\lambda_D$ needn't be a true eigenvalue but merely the bottom of the spectrum. However, if the Dirichlet Laplacian on $D$ has a discrete spectrum, then $\lambda_D=\lambda_1$. 

Now we can give a precise definition of the $V$-function and $V$-bound. In what follows, if $D$ is a Euclidean domain, then $\dim D$ denotes the topological dimension of $D$. 
\begin{definition}\label{def:Vogt_bound}
Let $\mathcal{D}$ be a class of Euclidean domains. We call a function $V:(0,1]\times\mathbb{N}\to [1,\infty)$ a $\boldsymbol{V}$\textbf{-function} for the class $\mathcal{D}$ if for all domains $D\in\mathcal{D}$, the corresponding $\boldsymbol{V}$\textbf{-bound}
\[
\sup_{x\in D}\mathbb{P}_x (\tau_D>t)\leq V(\epsilon,\dim D)e^{-(1-\epsilon)\lambda_D t}
\]
holds for all $0<\epsilon\leq 1$ and $t\geq 0$.
\end{definition}

It follows from \cite[Theorem 2.1]{Vogt2019} that 
\begin{equation}\label{eq:Vogt_func}
(\epsilon,d)\mapsto 2^{1/4}\left(\frac{1+1/\sqrt{\epsilon}}{2}\right)^{d/2}
\end{equation}
is a $V$-function for the class of all Euclidean domains. Moreover, \cite[Proposition 3.6]{improved_Vogt} shows that  
\begin{equation}\label{eq:improved_Vogt_func}
(\epsilon,d)\mapsto e^{d/4}\frac{\sqrt{2}}{(2d)^{d/4}}\sqrt{\frac{\Gamma(d)}{\Gamma(d/2)}}\left(\frac{1+1/\sqrt{\epsilon}}{2}\right)^{d/2}
\end{equation}
is a $V$-function for the same class that yields a better bound. 

\begin{rem}
It would be interesting to find a $V$-function tailored specifically to convex domains that improves upon \eqref{eq:improved_Vogt_func}. 
\end{rem}

We use the $V$-bounds corresponding to \eqref{eq:Vogt_func} and \eqref{eq:improved_Vogt_func} to replace an estimate used to prove \eqref{eq:Steinerberger_60} and \eqref{eq:Steinerberger_asymptotic} that Steinerberger deems particularly wasteful. These $V$-bounds have wide applicability to other problems featuring eigenvalues and exit times and have played a role in several recent results such as improving spectral bounds for the torsion function of Brownian motion \cite{Vogt2019,improved_Vogt} and symmetric stable processes \cite{Panzo}, estimating the loss of torsional rigidity due to a Brownian fracture \cite{fracture}, and establishing bounds for extremal problems related to the conformal Skorokhod embedding \cite{CSEP}. 

As pointed out in \cite[Section 3.2]{Steinerberger-2021a}, when both the Dirichlet and Neumann Laplacians on a domain $D\subset\mathbb{R}^d$ have a discrete spectrum, then a combination of the Faber-Krahn and Szeg\H{o}-Weinberger inequalities yields the \emph{ratio upper bound}
\begin{equation}\label{eq:ratio_bound}
\frac{\mu_2(D)}{\lambda_1(D)}\leq \frac{p_{d/2,1}^2}{j_{d/2-1,1}^2},
\end{equation}
where $j_{d/2-1,1}$ is the first positive root of the Bessel function $J_{d/2-1}$ and $p_{d/2,1}$ is the first positive root of the derivative of $x^{1-d/2}J_{d/2}(x)$. Equality in \eqref{eq:ratio_bound} holds only when $D$ is a ball in $\mathbb{R}^d$; see \cite[Section 7.6.3]{shape_optimization} for a quantitative improvement that takes into account how much $D$ deviates from a ball.

Now we can state our next main result which is a dimension-dependent upper bound for the
Hot Spots constant. This bound is rather general and can be tailored to any specific class of bounded Lipschitz domains by the appropriate choice of a $V$-function and ratio upper bound.

\begin{thm}\label{thm:main}
Suppose that $\mathcal{D}$ is a class of bounded Lipschitz domains which contains no domain of dimension $1$ and let $V:(0,1]\times\mathbb{N}\to [1,\infty)$ be a $V$-function for the class $\mathcal{D}$. Let the ratio upper bound $r:\mathbb{N}\to (0,\infty)$ satisfy
\[
\frac{\mu_2(D)}{\lambda_1(D)}\leq r(\dim D)<1
\]
for all $D\in\mathcal{D}$, and define 
\[
A_d:=\big(0,1-r(d)\big)\times [0,\infty),~d=2,3,\dots
\]
and
\[
S(\mathcal{D}):=\{d\in\mathbb{N}:\text{there exists }D\in\mathcal{D}\text{ with }\dim D=d\}.
\]
Then the Hot Spots constant of $\mathcal{D}$ defined by \eqref{eq:class_constant} has the upper bound
\begin{equation}\label{eq:general_formula}
\mathfrak{C}(\mathcal{D})\leq\sup_{d\in S(\mathcal{D}) }\inf_{(\epsilon,a)\in A_d}e^{r(d)a}\left(1+\frac{r(d)V(\epsilon,d)}{1-\epsilon-r(d)}e^{-(1-\epsilon)a}\right).
\end{equation}
\end{thm}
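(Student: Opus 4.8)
The plan is to combine the probabilistic representation of $\varphi_2$ via reflecting Brownian motion with the $V$-bound applied to an auxiliary Dirichlet-type domain, then optimize. First I would fix a bounded Lipschitz domain $D\in\mathcal{D}$ of dimension $d$ and a Neumann eigenfunction $\varphi_2$ corresponding to $\mu_2(D)$; by scale invariance of $\mathfrak{C}(D)$ and of the right-hand side of \eqref{eq:general_formula}, I may normalize as convenient. Let $M:=\sup_{x\in\partial D}\varphi_2(x)$ and pick $y\in D$ (eventually $y$ approaching an interior maximizer) with $\varphi_2(y)$ close to $\sup_{x\in D}\varphi_2(x)$; the goal is to bound $\varphi_2(y)/M$ by the claimed quantity. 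Using reflecting Brownian motion $X$ started at $y$ with normal reflection on $\partial D$, the eigenfunction equation \eqref{eq:eigenfunction} gives the martingale representation $e^{\mu_2 t}\varphi_2(X_t)$ is a martingale (the Neumann condition kills the boundary local-time term), so $\varphi_2(y)=\mathbb{E}_y\!\left[e^{\mu_2 t}\varphi_2(X_t)\right]$ for every $t\ge 0$ — this is presumably the ``key preliminary result'' of Section \ref{sec:RBM}, so I would simply invoke it.

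Next I would split the expectation according to whether the reflecting motion has already visited $\partial D$ by time $t$. On the event that $X$ has hit the boundary before time $t$, the strong Markov property at the last visit to $\partial D$ together with the same martingale identity lets me bound the contribution by $e^{\mu_2 t}M$ times a probability $\le 1$; more carefully, one writes $\varphi_2(y)\le e^{\mu_2 t}\big(M\,\mathbb{P}_y(\text{hit }\partial D\text{ before }t)+ \sup_D\varphi_2 \cdot \mathbb{P}_y(\text{no boundary hit before }t)\big)$, and the no-hit probability is exactly $\sup_{x\in D}\mathbb{P}_x(\tau_D>t)$ after taking $y$ to the interior maximizer. Applying the $V$-bound, $\mathbb{P}_y(\tau_D>t)\le V(\epsilon,d)e^{-(1-\epsilon)\lambda_1(D)t}$, and writing $\mathfrak{C}:=\sup_D\varphi_2/M \ (=\sup_D\varphi_2/\sup_{\partial D}\varphi_2)$, this rearranges to
\[
\mathfrak{C}\le e^{\mu_2 t}\Big(1 + \mathfrak{C}\,V(\epsilon,d)e^{-(1-\epsilon)\lambda_1 t}\Big),
\]
hence, solving for $\mathfrak{C}$ (valid as long as the bracketed denominator is positive),
\[
\mathfrak{C}\le \frac{e^{\mu_2 t}}{1 - V(\epsilon,d)e^{(\mu_2-(1-\epsilon)\lambda_1)t}}.
\]
Now substitute the ratio bound $\mu_2\le r(d)\lambda_1$ and set the dimensionless time $a:=\lambda_1 t\ge 0$; the exponent $\mu_2-(1-\epsilon)\lambda_1 \le (r(d)-1+\epsilon)\lambda_1<0$ precisely when $\epsilon<1-r(d)$, i.e. $(\epsilon,a)\in A_d$, which is why that region appears. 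Monotonicity in the exponent then yields
\[
\mathfrak{C}(D)\le \frac{e^{r(d)a}}{1 - V(\epsilon,d)e^{-(1-\epsilon-r(d))a}},
\]
and a short algebraic manipulation rewrites the right-hand side as $e^{r(d)a}\big(1+\tfrac{r(d)V(\epsilon,d)}{1-\epsilon-r(d)}e^{-(1-\epsilon)a}\big)$ — or at least as something dominated by it; I would check this identity/inequality by clearing denominators and comparing the two expressions directly (this is the one routine computation I would actually carry out). Taking the infimum over $(\epsilon,a)\in A_d$ and then the supremum over $d\in S(\mathcal{D})$, and recalling $\mathfrak{C}(\mathcal{D})=\sup_{D\in\mathcal{D}}\mathfrak{C}(D)$ from \eqref{eq:class_constant}, gives \eqref{eq:general_formula}.

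The main obstacle is making the boundary-splitting argument rigorous at the level of Lipschitz domains: one must justify that reflecting Brownian motion is well-defined and that $e^{\mu_2 t}\varphi_2(X_t)$ is a genuine martingale despite only Hölder (not smooth) regularity of $\varphi_2$ up to $\partial D$, and one must handle the ``last exit from $\partial D$'' decomposition carefully — it is cleaner to instead use a first-passage decomposition at $\sigma:=\inf\{s\le t: X_s\in\partial D\}$, apply the strong Markov property at $\sigma\wedge t$, and bound $\varphi_2(X_\sigma)\le M$ on $\{\sigma<t\}$ while bounding $\varphi_2(X_t)\le\sup_D\varphi_2$ on $\{\sigma\ge t\}=\{\tau_D>t\}$ where now $\tau_D$ is the exit time of the \emph{driving} Brownian motion before reflection, matching Definition \ref{def:Vogt_bound}. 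A secondary subtlety is the passage $y\to$ interior maximizer: since $\varphi_2\in C^0(\overline D)$ the supremum over $D$ is attained on $\overline D$, and if it is attained on $\partial D$ then $\mathfrak{C}(D)=1$ and there is nothing to prove, so one may assume the interior maximizer exists and lies in the open set $D$, where $\mathbb{P}_y(\tau_D>t)$ is controlled by $\sup_{x\in D}\mathbb{P}_x(\tau_D>t)$. I expect Section \ref{sec:RBM} supplies exactly the martingale identity and the facts about reflecting Brownian motion on Lipschitz domains needed to push this through cleanly.
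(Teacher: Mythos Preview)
Your decomposition is correct up to the display
\[
\mathfrak{C}(D)\le \frac{e^{r(d)a}}{1 - V(\epsilon,d)\,e^{-(1-\epsilon-r(d))a}},
\]
but the ``short algebraic manipulation'' you plan to carry out fails: this expression is \emph{not} dominated by $e^{r(d)a}\bigl(1+\tfrac{r(d)V(\epsilon,d)}{1-\epsilon-r(d)}e^{-(1-\epsilon)a}\bigr)$. Writing $\rho=1-\epsilon-r(d)$ and $y=V(\epsilon,d)e^{-\rho a}\in(0,1)$, the claimed inequality reduces (after cancelling $e^{r(d)a}$ and subtracting $1$) to $\tfrac{1}{1-y}\le \tfrac{r(d)}{\rho}e^{-r(d)a}$, which is false whenever $r(d)\le\rho$ since the left side exceeds $1$ while the right side does not. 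Concretely, optimizing your bound over $a$ yields $\tfrac{1-\epsilon}{\rho}\bigl(\tfrac{V(1-\epsilon)}{r}\bigr)^{r/\rho}$, whereas the theorem's bound optimizes to $\tfrac{1-\epsilon}{\rho}\,V^{r/(1-\epsilon)}$; with $r=0.5$, $\epsilon=0.1$, $V=2$ these are roughly $11.2$ versus $3.3$. So you have proved \emph{a} bound, but a strictly weaker one than \eqref{eq:general_formula}.

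The loss happens when you bound the post-hit contribution by $e^{\mu_2 t}M\,\mathbb{P}_y(\tau_D\le t)$. After the strong Markov property at $\tau_D$, the second piece of the split is exactly $\sup_{\partial D}\varphi_2\cdot e^{-\mu_2 t}\,\mathbb{E}_{x_0}\!\bigl[e^{\mu_2\tau_D}\mathbbm{1}_{\{\tau_D\le t\}}\bigr]$, and you are replacing $e^{\mu_2\tau_D}$ by its worst case $e^{\mu_2 t}$. The paper keeps this expectation and uses the elementary identity (Lemma~\ref{lem:Steinerberger_1})
\[
\mathbb{E}_{x_0}\!\bigl[e^{\mu_2\tau_D}\mathbbm{1}_{\{\tau_D\le t\}}\bigr]
=1-e^{\mu_2 t}\,\mathbb{P}_{x_0}(\tau_D>t)+\int_0^t\mu_2\,e^{\mu_2 s}\,\mathbb{P}_{x_0}(\tau_D>s)\D{s},
\]
then splits the integral at an intermediate time $u\in[0,t]$, applies the trivial bound $\mathbb{P}_{x_0}(\tau_D>s)\le 1$ on $[0,u]$ and the $V$-bound on $[u,t]$, sets $u=a/\lambda_1$, $t=b/\lambda_1$, substitutes $\mu_2/\lambda_1\le r(d)$, and sends $b\to\infty$. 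The $[0,u]$ piece produces the prefactor $e^{r(d)a}$, while the $[u,\infty)$ piece produces $\tfrac{r(d)V(\epsilon,d)}{1-\epsilon-r(d)}e^{-(1-\epsilon)a}$; the crucial factor $r(d)$ in the numerator --- absent from your bound --- comes from the $\mu_2$ sitting in front of the integral. That integral representation is the missing idea.
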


\begin{rem}
We exclude domains of dimension $1$ for technical reasons. As noted above in Section \ref{sec:HS_constant}, $\mathfrak{C}(D)=1$ if $\dim D=1$ so this is of no consequence.
\end{rem}

\begin{rem}
If the class $\mathcal{D}$ only contains domains of some fixed dimension $d$, then the upper bound can be simplified by omitting the supremum appearing in \eqref{eq:general_formula}.
\end{rem}

\begin{rem}
Regardless of the particulars of the class $\mathcal{D}$, the $V$-functions \eqref{eq:Vogt_func} and \eqref{eq:improved_Vogt_func} are always valid. Likewise, Lemma \ref{lem:ratio_bound} below shows that we can always use the ratio upper bound $r$ given by
\[
r(d)=\frac{4d+8}{d(d+8)}.
\]
\end{rem}

In the following corollary, we use Theorem \ref{thm:main} with the $V$-function \eqref{eq:improved_Vogt_func} to compute upper bounds for the $d$-dimensional Hot Spots constant $\mathfrak{C}_d$ for some specific values of $d$. When compared with \eqref{eq:Steinerberger_60}, we see that these bounds offer a 10-fold improvement. The table shows how the ratio of $p_{d/2,1}^2$ to $j_{d/2-1,1}^2$ is used for the ratio upper bound $r$ as indicated by \eqref{eq:ratio_bound}. These roots were calculated to high precision with Mathematica using \emph{FindRoot} and the built-in function \emph{BesselJ}. Moreover, the numbers were rounded up or down as appropriate to ensure that $r$ is an upper bound. The near-optimal pairs $(\epsilon,a)$ were also found with Mathematica using \emph{NMinimize}. 

\begin{cor}\label{cor:table}
The $d$-dimensional Hot Spots constant $\mathfrak{C}_d$ defined by \eqref{eq:d_constant} has the following upper bounds.
\begin{center}
\begin{tabular}{|c|c|c|c|c|c||c|}
\hline
$d$ & $p_{d/2,1}^2$ & $j_{d/2-1,1}^2$ & $r(d)$ & $\epsilon$ & $a$ & upper bound for $\mathfrak{C}_d$ \\
\hline \hline
$2$ & $3.3900$ & $5.7831$ & $0.5862$ & $0.0929$ & $1.0081$ & $5.1043$ \\
\hline
$3$ & $4.3330$ & $9.8696$ & $0.4391$ & $0.1485$ & $1.2205$ & $3.5288$ \\
\hline
$4$ & $5.2896$ & $14.681$ & $0.3604$ & $0.1903$ & $1.4325$ & $3.0200$ \\
\hline
$10$ & $11.160$ & $57.582$ & $0.1939$ & $0.3359$ & $2.5846$ & $2.3314$ \\
\hline
$100$ & $101.02$ & $3144.1$ & $0.0322$ & $0.6894$ & $16.219$ & $1.8809$ \\
\hline
\end{tabular}
\end{center}
\end{cor}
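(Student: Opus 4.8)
The plan is to obtain each row of the table by specializing Theorem \ref{thm:main} to the class $\mathcal{D}_d$ of all bounded Lipschitz domains in $\mathbb{R}^d$, so that $\mathfrak{C}(\mathcal{D}_d)=\mathfrak{C}_d$ by \eqref{eq:d_constant}. First I would observe that every $D\in\mathcal{D}_d$ is a nonempty open subset of $\mathbb{R}^d$, hence has topological dimension $\dim D=d$; consequently $S(\mathcal{D}_d)=\{d\}$ is a singleton and the outer supremum in \eqref{eq:general_formula} disappears, leaving
\[
\mathfrak{C}_d\leq\inf_{(\epsilon,a)\in A_d}e^{r(d)a}\left(1+\frac{r(d)V(\epsilon,d)}{1-\epsilon-r(d)}e^{-(1-\epsilon)a}\right),\qquad A_d=\big(0,1-r(d)\big)\times[0,\infty).
\]
Since $\mathcal{D}_d$ contains no one-dimensional domain when $d\geq 2$, Theorem \ref{thm:main} applies for each $d$ in the table.

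Next I would fix the two ingredients. For the $V$-function I would take \eqref{eq:improved_Vogt_func}, which is valid for all Euclidean domains and in particular for $\mathcal{D}_d$. For the ratio upper bound I would use $r(d)=p_{d/2,1}^2/j_{d/2-1,1}^2$: by the discreteness of both the Dirichlet and Neumann spectra on bounded Lipschitz domains, the Faber--Krahn/Szeg\H{o}--Weinberger bound \eqref{eq:ratio_bound} gives $\mu_2(D)/\lambda_1(D)\leq r(d)$ for all $D\in\mathcal{D}_d$, with equality only for balls; since a ball satisfies $\mu_2<\lambda_1$ strictly, $r(d)<1$, as required by Theorem \ref{thm:main}. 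To make the use of the tabulated decimal approximations rigorous, I would round $p_{d/2,1}^2$ \emph{up} and $j_{d/2-1,1}^2$ \emph{down} before forming the quotient, so that the displayed $r(d)$ is a genuine upper bound for $p_{d/2,1}^2/j_{d/2-1,1}^2$ and hence for $\mu_2(D)/\lambda_1(D)$; this is the role of the rounding conventions mentioned before the statement of Corollary \ref{cor:table}.

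Finally, for each $d\in\{2,3,4,10,100\}$ I would plug the corresponding near-optimal pair $(\epsilon,a)$ from the table into the right-hand side above. One first checks $(\epsilon,a)\in A_d$, i.e. $0<\epsilon<1-r(d)$ (equivalently $1-\epsilon-r(d)>0$, so the fraction is positive and finite) and $a\geq 0$; this holds in every row. Because the infimum over $A_d$ is bounded above by the value of the expression at any single admissible point, evaluating
\[
e^{r(d)a}\left(1+\frac{r(d)\,V(\epsilon,d)}{1-\epsilon-r(d)}e^{-(1-\epsilon)a}\right)
\]
with $V$ from \eqref{eq:improved_Vogt_func} and rounding the result \emph{up} yields precisely the ``upper bound for $\mathfrak{C}_d$'' column.

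I expect the only real obstacle to be numerical rigor rather than conceptual content: one must certify that the Bessel-root approximations, the value of $V(\epsilon,d)$ (which involves $\Gamma(d)/\Gamma(d/2)$ and $(2d)^{d/4}$ and is large for $d=100$), and the final arithmetic are all rounded in the conservative direction, so that the tabulated numbers are honest upper bounds and not merely approximations; this can be handled with interval arithmetic. The structural part—that Corollary \ref{cor:table} is an immediate specialization of Theorem \ref{thm:main}—is routine once $S(\mathcal{D}_d)=\{d\}$ is noted.
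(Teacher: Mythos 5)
Your proposal is correct and matches the paper's approach: the paper's ``proof'' of Corollary~\ref{cor:table} is precisely the paragraph preceding the statement, which explains that one specializes Theorem~\ref{thm:main} with the $V$-function \eqref{eq:improved_Vogt_func} and the ratio bound $r(d)=p_{d/2,1}^2/j_{d/2-1,1}^2$ from \eqref{eq:ratio_bound}, finds near-optimal $(\epsilon,a)$ numerically, and rounds conservatively. You supply the small structural details the paper leaves implicit (that $S(\mathcal{D}_d)=\{d\}$ so the outer supremum drops, that $r(d)<1$, and that $(\epsilon,a)\in A_d$), but these are exactly what the paper's Remark on fixed-dimension classes and Lemma~\ref{lem:ratio_bound} are meant to cover.
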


The table from Corollary \ref{cor:table} indicates that the upper bounds for the $d$-dimensional Hot Spots constants have a decreasing trend. Our last main result extrapolates this trend by establishing an asymptotic upper bound for $\mathfrak{C}_d$ as $d\to\infty$. This both generalizes and improves the asymptotic upper bound from \eqref{eq:Steinerberger_asymptotic}.

\begin{thm}\label{thm:asymptotic_bound}
The $d$-dimensional Hot Spots constant $\mathfrak{C}_d$ defined by \eqref{eq:d_constant} has the asymptotic upper bound
\[
\limsup_{d\to\infty}\mathfrak{C}_d\leq\sqrt{e} \approx 1.6487.
\]
\end{thm}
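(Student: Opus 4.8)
The plan is to deduce Theorem \ref{thm:asymptotic_bound} from Theorem \ref{thm:main} by taking the limit $d\to\infty$ of the right-hand side of \eqref{eq:general_formula}, using the improved $V$-function \eqref{eq:improved_Vogt_func} together with a good asymptotic handle on the ratio bound $r(d)$. First I would record the asymptotics of the ratio $p_{d/2,1}^2/j_{d/2-1,1}^2$: since $j_{\nu,1}\sim\nu+c_1\nu^{1/3}$ and $p_{\nu,1}$ (the first positive critical point of $x^{1-d/2}J_{d/2}$) has a comparable expansion, one gets $j_{d/2-1,1}^2\sim d^2/4$ and $p_{d/2,1}^2\sim d/2$, so that $r(d)\to 0$ like $2/d$; in particular for the asymptotic statement I only need \emph{some} valid ratio bound with $r(d)\to 0$, and the crude choice $r(d)=\frac{4d+8}{d(d+8)}\sim 4/d$ from Lemma \ref{lem:ratio_bound} already suffices, which avoids invoking Bessel asymptotics at all. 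Either way, set $r=r(d)$ and note $d\cdot r(d)$ is bounded (tending to $2$ or $4$).

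Next I would estimate $V(\epsilon,d)$ from \eqref{eq:improved_Vogt_func}. Writing $V(\epsilon,d)=e^{d/4}\frac{\sqrt2}{(2d)^{d/4}}\sqrt{\Gamma(d)/\Gamma(d/2)}\,\big(\tfrac{1+1/\sqrt\epsilon}{2}\big)^{d/2}$, Stirling gives $\Gamma(d)/\Gamma(d/2)\sim 2^{d}(d/2)^{d/2}e^{-d/2}\cdot(\text{poly})$, and combining with the $e^{d/4}(2d)^{-d/4}$ prefactor one finds the ``$d$-independent'' exponential growth rate cancels down so that $V(\epsilon,d)=C(d)\big(\tfrac{1+1/\sqrt\epsilon}{2}\big)^{d/2}$ where $C(d)$ grows only sub-exponentially (indeed $C(d)^{1/d}\to 1$). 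The upshot is $V(\epsilon,d)^{1/d}\to \sqrt{(1+1/\sqrt\epsilon)/2}$. Now in \eqref{eq:general_formula} I would make the substitution $a=d b$ for a new free parameter $b\geq 0$, so the expression inside the infimum becomes
\[
e^{r(d)\,d b}\left(1+\frac{r(d)\,V(\epsilon,d)}{1-\epsilon-r(d)}e^{-(1-\epsilon)d b}\right).
\]
Since $r(d)d\to \rho$ (with $\rho=2$ or $4$), the first factor tends to $e^{\rho b}$. In the second term, $r(d)\to 0$ kills the coefficient unless $V(\epsilon,d)e^{-(1-\epsilon)db}$ blows up; its exponential rate is $\tfrac12\log\tfrac{1+1/\sqrt\epsilon}{2}-(1-\epsilon)b$ per dimension, times $d$, and then multiplied by $r(d)\sim\rho/d$. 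So the second term's contribution to the limit is $\rho b_\epsilon$-type only on a critical curve — more precisely, if we choose $b$ and $\epsilon$ so that $\tfrac12\log\tfrac{1+1/\sqrt\epsilon}{2}<(1-\epsilon)b$, the whole second term vanishes in the limit and we are left with $e^{\rho b}$. Optimizing, we want $b$ as small as possible, so we push $\epsilon\to 1$: as $\epsilon\uparrow 1$, $\tfrac12\log\tfrac{1+1/\sqrt\epsilon}{2}\to\tfrac12\log 1=0$, so any $b>0$ works, and then $e^{\rho b}\to 1$ as $b\downarrow 0$. This would give $\limsup_d\mathfrak{C}_d\le 1$, which is \emph{too strong} (it would prove HS2!), so I must be more careful: the denominator $1-\epsilon-r(d)$ forces $\epsilon<1-r(d)$, and more importantly one cannot send $\epsilon\to 1$ and $b\to 0$ simultaneously without the second term reasserting itself because $r(d)V(\epsilon,d)$ need not be small when $\epsilon$ is close to $1$ and $d$ is large — $V(\epsilon,d)\to\infty$ as $\epsilon\to 1$ only polynomially per fixed $d$ but the relevant regime couples $\epsilon$ to $d$.

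The honest approach is therefore to not fix $b$ and $\epsilon$ independently of $d$ but to choose $\epsilon=\epsilon(d)\to 1$ and $a=a(d)$ at matched rates, exactly as the numerical table suggests ($\epsilon$ increasing toward $1$, $a$ growing). Concretely I would set $1-\epsilon=\gamma/d$ (so $\sqrt\epsilon\approx 1-\gamma/(2d)$, $1/\sqrt\epsilon\approx 1+\gamma/(2d)$, hence $\tfrac{1+1/\sqrt\epsilon}{2}\approx 1+\gamma/(4d)$ and $\big(\tfrac{1+1/\sqrt\epsilon}{2}\big)^{d/2}\to e^{\gamma/8}$) and choose $a$ so that the balance between the $e^{r(d)a}$ growth and the $e^{-(1-\epsilon)a}$ decay lands optimally; with $r(d)\sim\rho/d$ and $(1-\epsilon)\sim\gamma/d$, taking $a=\alpha d$ makes $e^{r(d)a}\to e^{\rho\alpha}$ and $e^{-(1-\epsilon)a}\to e^{-\gamma\alpha}$, while $r(d)V(\epsilon,d)\to 0\cdot e^{\gamma/8}=0$ — so again the second term dies and we get $e^{\rho\alpha}$, minimized at $\alpha=0$, giving $1$. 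This still looks too strong, which tells me the real content is that one of these limits is \emph{not} uniform: the infimum in \eqref{eq:general_formula} is over $(\epsilon,a)\in A_d$ and one must produce, for each large $d$, an \emph{admissible} pair making the bracket close to $\sqrt e$, and one cannot do better because of a genuine constraint I'm glossing over. I suspect the resolution is that $r(d)$ should be taken as the \emph{true} Bessel ratio $\sim 2/d$ but the coefficient multiplying $a$ in the \emph{first} factor and the $(1-\epsilon)$ in the exponent are not both free: optimizing $\inf_a e^{ra}(1+K e^{-(1-\epsilon)a})$ over $a\ge 0$ gives, by calculus, an optimal $a^*$ with value behaving like $K^{r/(1-\epsilon)}\cdot(\text{const})$, and $r/(1-\epsilon)\to \rho/\gamma$; choosing $\gamma$ to optimize and plugging $K=\frac{rV}{1-\epsilon-r}\sim\frac{(\rho/d)e^{\gamma/8}}{(\gamma-\rho)/d}=\frac{\rho e^{\gamma/8}}{\gamma-\rho}$ (a \emph{constant}, not vanishing!) is the correct bookkeeping — I was wrong that the second term dies, because $r(d)$ and $1-\epsilon-r(d)$ vanish at the \emph{same} rate $1/d$, so their ratio survives.

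So the corrected key steps are: (i) fix the true ratio bound $r(d)\sim 2/d$, equivalently work with $\rho:=\lim d\,r(d)=2$ (using Bessel asymptotics for $p_{d/2,1},j_{d/2-1,1}$); (ii) parametrize $1-\epsilon=\gamma/d$ with $\gamma>\rho$ fixed and $a=\alpha d$ with $\alpha>0$ fixed; (iii) compute the limit of the bracket: the first factor $\to e^{\rho\alpha}$, the coefficient $\frac{r(d)V(\epsilon,d)}{1-\epsilon-r(d)}\to\frac{\rho}{\gamma-\rho}e^{\gamma/8}$ by Stirling applied to \eqref{eq:improved_Vogt_func} plus the $(1+\gamma/(4d))^{d/2}\to e^{\gamma/8}$ computation, and the decay factor $\to e^{-\gamma\alpha}$, so the bracket $\to e^{\rho\alpha}\big(1+\frac{\rho}{\gamma-\rho}e^{\gamma/8}e^{-\gamma\alpha}\big)$; (iv) take the infimum over $\alpha>0$ and $\gamma>\rho$ of this closed-form limit — this is a two-variable calculus problem whose minimum I expect to work out to exactly $\sqrt e$ (with $\rho=2$; the optimal $\alpha$ solves a transcendental equation and the optimal $\gamma$ is some explicit value); (v) justify the interchange $\limsup_d\inf_{A_d}\le\inf_{\gamma,\alpha}\lim_d$ — this is the one direction we need and it is immediate since for each admissible $(\gamma,\alpha)$ the pair $(\epsilon,a)=(1-\gamma/d,\alpha d)$ lies in $A_d$ for all large $d$ (because $1-r(d)>1-\gamma/d$ once $d$ is large), so $\inf_{A_d}(\cdots)\le (\text{value at this pair})$, and then pass to the limit in $d$ and afterwards take the inf over $(\gamma,\alpha)$. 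The main obstacle is squarely step (iv): carrying out the two-parameter optimization of $e^{\rho\alpha}\big(1+\frac{\rho}{\gamma-\rho}e^{\gamma/8-\gamma\alpha}\big)$ and verifying the minimum equals $\sqrt e$; a secondary technical point is step (iii), making the Stirling estimate of \eqref{eq:improved_Vogt_func} rigorous enough to identify the exact constant $e^{\gamma/8}$ rather than just an exponential order.
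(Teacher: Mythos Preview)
Your overall strategy is exactly the paper's: plug a $V$-function and a ratio bound $r(d)\sim\rho/d$ into Theorem~\ref{thm:main}, scale $a$ linearly in $d$, and pass to the limit. The paper parametrizes $\epsilon_d=(1+cd^{\alpha})^{-2}$ with $\alpha\in(-1,-\tfrac12]$ and $a=kd$, then argues that $k=\tfrac18$ is forced; your choice $1-\epsilon=\gamma/d$ is essentially the boundary case $\alpha=-1$ of that family, which also works and has the advantage that the limiting expression is a clean two-parameter function rather than something that must be argued to vanish. Your step~(v) justification of the order of limits is exactly right and is the only direction needed.

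There is one concrete error. The asymptotic $p_{d/2,1}^2\sim d/2$ is wrong: from the Szeg\H{o} bound $p_{d/2,1}^2<d+2$ (which the table shows to be asymptotically sharp, e.g.\ $p_{50,1}^2\approx 101$) one has $p_{d/2,1}^2\sim d$, and since $j_{d/2-1,1}^2\sim d^2/4$ the \emph{true} ratio bound also satisfies $r(d)\sim 4/d$. In other words $\rho=4$, not $2$, and there is no asymptotic gain over Lemma~\ref{lem:ratio_bound}. Committing to $\rho=2$ as in your step~(i) would make $r(d)$ \emph{fail} to be an upper bound for $\mu_2/\lambda_1$, so Theorem~\ref{thm:main} could not be invoked; the resulting $e^{1/4}$ would be unjustified. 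Use $r(d)=\tfrac{4d+8}{d(d+8)}$ as you already suggest.

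With $\rho=4$ your step~(iv) is routine calculus, not an obstacle. Writing $C$ for the limit of the $V$-prefactor (Stirling gives $C=2^{1/4}$ for both \eqref{eq:Vogt_func} and \eqref{eq:improved_Vogt_func}, so the choice is immaterial), set
\[
F(\alpha,\gamma)=e^{\rho\alpha}\Bigl(1+\tfrac{\rho C}{\gamma-\rho}\,e^{\gamma/8-\gamma\alpha}\Bigr),\qquad \gamma>\rho,\ \alpha>0.
\]
The $\alpha$-derivative vanishes exactly when $Ce^{\gamma/8-\gamma\alpha}=1$, i.e.\ at $\alpha^*=\tfrac18+\tfrac{\log C}{\gamma}$, and there
\[
F(\alpha^*,\gamma)=e^{\rho\alpha^*}\Bigl(1+\tfrac{\rho}{\gamma-\rho}\Bigr)=e^{\rho/8}\,C^{\rho/\gamma}\,\frac{\gamma}{\gamma-\rho},
\]
which is strictly decreasing to $e^{\rho/8}$ as $\gamma\to\infty$. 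Hence $\inf_{\alpha,\gamma}F=e^{\rho/8}=e^{1/2}=\sqrt{e}$, and your step~(v) converts this into $\limsup_{d}\mathfrak{C}_d\le\sqrt{e}$. Note that the exact value of $C$ is irrelevant in the end because $C^{\rho/\gamma}\to 1$, so the Stirling computation in your step~(iii) need only show $C(d)$ is bounded, not identify it.
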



\section{Preliminary results}


\subsection{Neumann Laplacian eigenvalues and eigenfunctions}\label{sec:eigenvalues}

Unlike the case of Dirichlet boundary conditions, the boundedness of $D$ alone is not sufficient to imply a discrete spectrum for the Neumann Laplacian. Indeed, pathological planar domains consisting of a series of ``rooms and passages'' can be constructed to demonstrate this; see \cite{rooms_passages}. On the other hand, when $D$ is a bounded Lipschitz domain, it follows from the compactness of the embedding $H^1(D)\hookrightarrow L^2(D)$ that the Neumann Laplacian has a discrete spectrum; see \cite[Theorem 1.2.8]{Henrot}.

A control on the ratio $\frac{\mu_2}{\lambda_1}$ is an essential component of Theorem \ref{thm:main} and the following lemma gives a simple dimension-dependent upper bound that is valid for all bounded Lipschitz domains. This result can be seen as a refinement of \cite[Lemma 3]{Steinerberger-2021a} when $d\geq 4$ and is an easy consequence of some known estimates of Bessel zeros.

\begin{lem}\label{lem:ratio_bound}
If $D\subset\mathbb{R}^d$ is a bounded Lipschitz domain, then we have
\[
\frac{\mu_2(D)}{\lambda_1(D)}< \frac{4d+8}{d(d+8)}.
\]
\end{lem}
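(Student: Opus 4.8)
The plan is to combine the ratio upper bound \eqref{eq:ratio_bound}, which holds for all bounded Lipschitz domains in $\mathbb{R}^d$ by the Faber-Krahn and Szeg\H{o}-Weinberger inequalities, with explicit estimates on the two Bessel-type zeros appearing there. Thus it suffices to show
\[
\frac{p_{d/2,1}^2}{j_{d/2-1,1}^2}<\frac{4d+8}{d(d+8)}.
\]
First I would recall the standard lower bound for the first positive zero of $J_\nu$, namely $j_{\nu,1}^2 > \nu(\nu+2)$ (equivalently $j_{\nu,1}>\sqrt{\nu(\nu+2)}$), which is classical and sharp in the sense that it captures the leading behavior for large $\nu$; with $\nu=d/2-1$ this gives $j_{d/2-1,1}^2 > \tfrac{d}{2}\cdot\tfrac{d-2}{2}\cdot\tfrac{?}{}$ — more precisely $\nu(\nu+2)=(d/2-1)(d/2+1)=d^2/4-1$. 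Second, for the numerator I would use an upper bound for $p_{\nu,1}$, the first positive zero of the derivative of $x^{-\nu}J_\nu(x)$ (with $\nu=d/2$); since $p_{\nu,1}<j_{\nu,1}$ and there are known bounds such as $p_{\nu,1}^2<2\nu+2$ or sharper ones like $p_{\nu,1}^2 \le \nu+\sqrt{\dots}$ — I would pick whichever clean bound makes the final inequality go through, likely of the form $p_{d/2,1}^2 < \tfrac{d}{2}+ c\sqrt{d}+\dots$ or simply $p_{d/2,1}^2<d+2$.

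Then the heart of the argument is purely algebraic: substituting the two estimates, the inequality to verify becomes something like
\[
\frac{d+2}{d^2/4-1}=\frac{4(d+2)}{d^2-4}=\frac{4(d+2)}{(d-2)(d+2)}=\frac{4}{d-2},
\]
and one checks $\frac{4}{d-2} \le \frac{4d+8}{d(d+8)} = \frac{4(d+2)}{d(d+8)}$, i.e. $d(d+8)\le(d-2)(d+2)=d^2-4$, which is \emph{false}. So the crude bounds $j^2>d^2/4-1$ and $p^2<d+2$ are not tight enough, and I would need to sharpen at least one of them — probably the lower bound on $j_{d/2-1,1}$, using a second-order estimate such as $j_{\nu,1}^2>\nu^2+2\nu^{?}$, or the known inequality $j_{\nu,1}>\nu+a_1 2^{-1/3}\nu^{1/3}$ (with $a_1$ the first Airy zero) together with a matching refined upper bound for $p_{\nu,1}$ of the form $p_{\nu,1}<\sqrt{\nu(\nu+2)}$ or better. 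The claim that this is a refinement of \cite[Lemma 3]{Steinerberger-2021a} only for $d\ge 4$ suggests the chosen bounds are not uniformly best for small $d$, so I would expect to verify the small cases $d=2,3$ (and possibly $d=4,5$) directly from the numerical values of the Bessel zeros — indeed the table in Corollary \ref{cor:table} already lists $p_{d/2,1}^2$ and $j_{d/2-1,1}^2$ for $d=2,3,4$, from which one reads off $r(2)\approx 0.586$ versus $(4\cdot2+8)/(2\cdot10)=0.8$, $r(3)\approx 0.439$ versus $20/33\approx 0.606$, etc., confirming the inequality numerically in those cases.

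The main obstacle, therefore, is not the reduction to Bessel zeros — that is immediate from \eqref{eq:ratio_bound} — but rather locating in the literature (e.g. Watson's treatise, or the surveys of Elbert, or \texttt{DLMF} \S10.21) a pair of bounds on $p_{d/2,1}$ and $j_{d/2-1,1}$ that are simultaneously clean enough to manipulate and tight enough that the algebraic inequality $p^2/j^2 < (4d+8)/(d(d+8))$ holds for all $d\ge 2$ (or all $d$ above some small threshold, with the remaining cases checked by hand). A promising route is to use the uniform bound $p_{\nu,1}^2 < 2(\nu+1)$ together with a sharper lower bound for $j_{\nu,1}$ than $\nu(\nu+2)$; since $2(\nu+1)=d+2$ with $\nu=d/2$, and since for the inequality to close we'd want $j_{d/2-1,1}^2 \gtrsim \tfrac14 d(d+8)\cdot\tfrac{d+2}{d+2}=\tfrac14 d(d+8)$ — wait, more carefully we need $j_{d/2-1,1}^2 > \dfrac{(d+2)d(d+8)}{4(d+2)}=\dfrac{d(d+8)}{4}$, i.e. $j_{d/2-1,1}^2 > \tfrac14(d^2+8d)$, whereas $\nu(\nu+2)$ with $\nu=d/2-1$ only gives $\tfrac14(d^2-4)$; the gap is $\tfrac14(8d+4)=2d+1$. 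Hence one needs a lower bound on $j_{\nu,1}^2$ that beats $\nu(\nu+2)$ by roughly $4\nu+5$, which rules out the simplest estimate but is easily within reach of e.g. $j_{\nu,1}^2 \ge \nu^2 + c\nu^{4/3}$-type bounds or the McMahon expansion lower bounds for $\nu$ not too large; I would select the appropriate one, push through the elementary algebra, and mop up the finitely many small $d$ using the tabulated values.
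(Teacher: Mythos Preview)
Your reduction is exactly the paper's: apply \eqref{eq:ratio_bound} and bound the two Bessel zeros separately. You even land on the right upper bound $p_{d/2,1}^2<d+2$ (this is Szeg\H{o}'s inequality, cited in the paper as \cite{Szego}) and, after rejecting the crude estimate $j_{\nu,1}^2>\nu(\nu+2)$, you correctly compute that what you need is precisely
\[
j_{d/2-1,1}^2>\tfrac{1}{4}\,d(d+8).
\]
The only thing you are missing is that this exact inequality is already in the literature: it is Equation~(1) of Lorch \cite{Lorch}, valid for all $\nu\ge 0$ (equivalently all $d\ge 2$). With that citation in hand the proof is two lines and requires no Airy-zero asymptotics, no McMahon expansion, and no case-checking for small $d$. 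Your plan to patch things with $j_{\nu,1}>\nu+a_1 2^{-1/3}\nu^{1/3}$ plus numerical verification for small $d$ would work but is unnecessarily laborious; the Lorch bound closes the argument uniformly.
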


\begin{proof}[Proof of Lemma \ref{lem:ratio_bound}]
From Equation (1') of \cite{Szego} we have
\[
p_{d/2,1}^2<d+2
\]
and from Equation (1) of \cite{Lorch} we have
\[
j_{d/2-1,1}^2>\frac{1}{4}d(d+8).
\]
The lemma follows from substituting these bounds into \eqref{eq:ratio_bound}.
\end{proof}

As discussed in Section \ref{sec:HS_constant}, both \eqref{eq:well_ratio} and the proof of Theorem \ref{thm:iff_condition} assume that $\varphi_2$ must change sign on $\partial D$. The following lemma confirms that this is indeed true. The proof borrows an idea of Pleijel \cite{Pleijel} from the planar case.

\begin{lem}\label{lem:Pleijel}
Suppose $D\subset\mathbb{R}^d$ is a bounded Lipschitz domain and let $\varphi_2$ be a Neumann Laplacian eigenfunction corresponding to $\mu_2$. Then $\varphi_2$ takes positive and negative values on $\partial D$.
\end{lem}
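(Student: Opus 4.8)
The plan is to argue by contradiction, following Pleijel's idea adapted from the planar case. Suppose $\varphi_2$ does not change sign on $\partial D$; without loss of generality (replacing $\varphi_2$ by $-\varphi_2$ if necessary), assume $\varphi_2 \geq 0$ on $\partial D$. The key device is to relate $\varphi_2$ to a Dirichlet eigenvalue problem on a nodal-type subdomain. First I would recall that $\varphi_2$ changes sign in $D$ (by orthogonality to the constant $\varphi_1$), so both $D^+ := \{x \in D : \varphi_2(x) > 0\}$ and $D^- := \{x \in D : \varphi_2(x) < 0\}$ are nonempty open subsets of $D$. The strategy is to show that the assumption $\varphi_2 \geq 0$ on $\partial D$ forces $\varphi_2$ to be, in a suitable weak sense, a Dirichlet eigenfunction on $D^-$ with eigenvalue $\mu_2$, hence $\lambda_1(D^-) \leq \mu_2(D) = \mu_2$.

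The main steps would be as follows. First, establish that $\varphi_2$ restricted to $D^-$, extended by zero to $D$, lies in $H_0^1(D^-)$; this uses that $\varphi_2 < 0$ strictly on $D^-$ and $\varphi_2 = 0$ on $\partial D^- \cap D$ while $\varphi_2 \geq 0$ on $\partial D$ forces the boundary values to vanish on all of $\partial D^-$ — care is needed here because $D^-$ need not be connected and its boundary can touch $\partial D$ in complicated ways, but the Lipschitz regularity of $D$ plus continuity of $\varphi_2$ on $\overline{D}$ keeps this manageable. Second, using $\varphi_2^- := \max(-\varphi_2, 0) \in H^1(D)$ as a test function in the weak formulation of the Neumann eigenvalue equation $\int_D \nabla\varphi_2 \cdot \nabla\psi = \mu_2 \int_D \varphi_2\,\psi$ for all $\psi \in H^1(D)$, and exploiting $\nabla\varphi_2^- = -\mathbbm{1}_{D^-}\nabla\varphi_2$, deduce that $\int_{D^-}|\nabla\varphi_2|^2 = \mu_2 \int_{D^-}\varphi_2^2$. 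Combined with $\varphi_2|_{D^-} \in H_0^1(D^-)$ and the variational characterization of $\lambda_1(D^-)$, this yields $\lambda_1(D^-) \leq \mu_2$. Third, derive a contradiction: since $D^-$ is a proper open subdomain of $D$ (as $D^+$ is nonempty open), strict domain monotonicity of the Dirichlet principal eigenvalue gives $\lambda_1(D) < \lambda_1(D^-) \leq \mu_2$; but Lemma \ref{lem:ratio_bound} (or even just $\mu_2/\lambda_1 < 1$, which is the content of the ratio bound) gives $\mu_2 < \lambda_1(D)$, a contradiction. Symmetrically the case $\varphi_2 \leq 0$ on $\partial D$ is handled by working with $D^+$.

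I expect the main obstacle to be the rigorous verification that $\varphi_2|_{D^-}$ (zero-extended) belongs to $H_0^1(D^-)$, since the nodal domain $D^-$ may be irregular and its boundary interacts with $\partial D$ in a potentially pathological way. The cleanest route around this is probably to avoid $H_0^1$ membership altogether and instead argue purely through the test-function identity: take $\psi = \varphi_2^-$ directly in the weak Neumann formulation to get the energy identity $\int_D |\nabla \varphi_2^-|^2 = \mu_2 \int_D (\varphi_2^-)^2$, then observe that $\varphi_2^-$ vanishes on $\partial D$ (by the sign assumption) so it can be used to bound $\lambda_1(D \setminus \overline{D^+})$ or more carefully $\lambda_D$ of the open set where $\varphi_2 < 0$ via the domain-level variational quotient. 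If one wants to stay entirely at the level of $D$: since $\varphi_2^- \not\equiv 0$, has $\int_D (\varphi_2^-)^2 > 0$, and the identity says the Rayleigh quotient of $\varphi_2^-$ equals $\mu_2$, one needs $\varphi_2^-$ to be admissible for a Dirichlet problem — which it is on the open set $U := \{\varphi_2 < 0\} \cup (\text{interior considerations})$, giving $\lambda_U \leq \mu_2 < \lambda_1(D) \leq \lambda_U$ by monotonicity since $U \subsetneq D$, the contradiction. The delicate point is justifying $\lambda_U \le \mu_2$, which amounts to approximating $\varphi_2^-$ by $C_c^\infty(U)$ functions; I would handle this by a standard mollification/truncation argument using that $\varphi_2^- \in H^1(D)$ is continuous on $\overline D$ and supported in $\overline U$ with $\varphi_2^- = 0$ on $\partial U \cap D$ and on $\partial D$.
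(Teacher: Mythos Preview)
Your argument is correct and follows essentially the same route as the paper: both derive a contradiction by showing that if $\varphi_2$ vanished on the entire boundary of a nodal domain, its restriction would yield a Dirichlet eigenfunction there with eigenvalue $\mu_2$, so domain monotonicity gives $\lambda_1(D)\le\mu_2$, contradicting $\mu_2<\lambda_1(D)$ from Lemma~\ref{lem:ratio_bound}. The paper organizes this slightly differently (it shows $\varphi_2$ cannot vanish on all of $\partial D_+$, then argues the resulting point lies on $\partial D$, invoking Courant's theorem for connectedness of $D_\pm$), and one minor correction to your write-up: you do not need \emph{strict} domain monotonicity, since the non-strict inequality $\lambda_1(D)\le\lambda_1(D^-)$ already contradicts the strict $\mu_2<\lambda_1(D)$.
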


\begin{proof}[Proof of Lemma \ref{lem:Pleijel}]
The claim is trivial for $d=1$ so assume $d\geq 2$. Since $\varphi_2$ must take positive and negative values on $D$, it follows from Courant's nodal domain theorem \cite[Theorem 1.3.2]{Henrot} that the open subsets of $\mathbb{R}^d$ defined by
\[
D_+:=\{x\in D:\varphi_2(x)>0\}~~~~\text{ and }~~~~D_-:=\{x\in D:\varphi_2(x)<0\}
\]
are both nonempty and connected. Hence $D_+$ and $D_-$ are both bounded domains in $\mathbb{R}^d$. 

Next, consider the boundaries of $D_+$ and $D_-$, namely $\partial D_+$ and $\partial D_-$, which are both subsets of $\overline{D}$. Notice that the continuity of $\varphi_2$ on $\overline{D}$ implies that $\varphi_2\geq 0$ on $\partial D_+$ and $\varphi_2\leq 0$ on $\partial D_-$. We claim that there exist points $x_+\in \partial D_+$ and $x_-\in \partial D_-$ such that $\varphi_2(x_+)>0$ and $\varphi_2(x_-)<0$. To see that this is true, suppose for a contradiction that $\varphi_2$ vanishes on $\partial D_+$. Then the restriction of $\varphi_2$ to $D_+$ is the (nonnormalized) first eigenfunction of the Dirichlet Laplacian on $D_+$ with eigenvalue $\lambda_1(D_+)$ equal to $\mu_2(D)$. Hence the domain monotonicity property of Dirichlet Laplacian eigenvalues \cite[Section 1.3.2]{Henrot} implies that 
\[
\mu_2(D)=\lambda_1(D_+)\geq \lambda_1(D).
\]
However, this contradicts the fact that $\mu_2(D)<\lambda_1(D)$; see Lemma \ref{lem:ratio_bound} or \cite{Filonov}. The same argument can be used on $\partial D_-$ and this establishes the existence of the points $x_+$ and $x_-$.

Lastly, we show that $x_+\in \partial D$ and $x_-\in \partial D$. Suppose for a contradiction that $x_+\notin \partial D$. Then we must have $x_+\in D$. Since $\varphi_2(x_+)>0$, this implies that $x_+\in D_+$ by definition. However, this contradicts the fact that $x_+\in \partial D_+$. Arguing similarly for $x_-$ proves the lemma. 
\end{proof}


\subsection{Reflecting Brownian motion}\label{sec:RBM}

The connection between reflecting Brownian motion (RBM) and boundary value problems for the Neumann Laplacian goes back at least to \cite{Ikeda}; see also \cite{Korostelev,Brosamler}. In \cite{Bass_Hsu}, Bass and Hsu construct RBM as a continuous strong Markov process on the closure of a bounded Lipschitz domain $D$; see also \cite[Remark 2.4]{sankhya}. While Fukushima \cite{Fukushima} had already constructed RBM for arbitrary bounded domains, in general this process actually lives on a certain compactification of the domain and not necessarily on $\overline{D}$ itself. Regardless of the boundary regularity, the transition density $p(t,x,y)$ of RBM, also called the Neumann heat kernel of the domain, is known to be smooth in the interior of $D$. However, if the boundary of $D$ is Lipschitz, then $p(t,x,y)$ can be extended to a continuous function on $(0,\infty)\times\overline{D}\times\overline{D}$; see \cite[Remark 4.1 and Lemma 4.3]{Bass_Hsu}.

For the rest of this section, let $D$ be a bounded Lipschitz domain and let $\mathbb{P}_x$ and $\mathbb{E}_x$ be the law and expectation under which $X=(X_t:t\geq 0)$ is $\overline{D}$-valued RBM starting at $x\in\overline{D}$ and running at twice the usual speed. Then it can be shown that for each $t>0$, the transition density of $X$ is the kernel of a positive self-adjoint Hilbert--Schmidt integral operator mapping $L^2(D)$ into $L^2(D)$. In particular, the orthonormal basis for $L^2(D)$ satisfying \eqref{eq:eigenfunction} can be used to give the Hilbert--Schmidt expansion 
\begin{equation}\label{eq:expansion}
p(t,x,y)=\sum_{j=1}^\infty e^{-\mu_j t}\varphi_j(x)\varphi_j(y),~t>0,~x,y\in \overline{D},
\end{equation}
where the convergence is absolute and uniform on $[t_0,\infty)\times\overline{D}\times\overline{D}$ for every $t_0>0$; see \cite[Theorem 10]{Pascu_thesis} for a detailed proof. An important consequence of this expansion is the eigenfunction identity
\begin{align}\label{eq:RBM_eigenfunction}
\mathbb{E}_x[\varphi_j(X_t)]&=\int_D p_t(x,y)\varphi_j(y)\D{y}\nonumber \\
&=e^{-\mu_j t}\varphi_j(x),~t\geq 0,~x\in\overline{D},
\end{align}
which holds for all $j\in\mathbb{N}$. The interchange of integration and summation necessary to deduce \eqref{eq:RBM_eigenfunction} from the orthogonality of the eigenfunctions when $t>0$ can be justified by the uniform convergence of the right-hand side of \eqref{eq:expansion}.

The following lemma is the key ingredient in the proof of Theorem \ref{thm:main}. It is inspired by Lemma 1 of \cite{Steinerberger-2021a} and the proof begins similarly by splitting the expectation appearing in \eqref{eq:RBM_eigenfunction} into two parts depending on whether or not $X$ has hit the boundary $\partial D$ by time $t$. Hence we need to define the exit time of the RBM $X$ from $D$. This is analogous to the definition \eqref{eq:exit_time} for ordinary Brownian motion $W$. Note that the exit times of both processes are $0$ by definition when starting from $\partial D$, and both have the same law when starting from inside $D$ (recall that both processes run at twice the usual speed). These processes only differ after hitting $\partial D$ so their exit times from $D$ are equivalent for the purposes of this lemma.

\begin{lem}\label{lem:Steinerberger_1}
Suppose $D$ is a bounded Lipschitz domain and let $\tau_D$ denote the first exit time of $D$ by Brownian motion running at twice the usual speed. Fix any $j\in\mathbb{N}$ and let $\varphi_j$ be a Neumann Laplacian eigenfunction corresponding to $\mu_j$. Then there exists at least one $x_0\in\overline{D}$ such that $\varphi_j(x_0)=\sup_{x\in D}\varphi_j(x)$. Moreover, for all such $x_0$ and all $t\geq 0$, we have
\begin{equation}\label{eq:Steinerberger_lem}
\begin{split}
1&\leq e^{\mu_j t}\mathbb{P}_{x_0}(\tau_D>t)\left(1-\frac{\displaystyle \sup_{x\in \partial D}\varphi_j(x)}{\displaystyle \sup_{x\in D}\varphi_j(x)}\right)\\
&~~~+\left(1+\int_0^t\mu_j e^{\mu_j s}\mathbb{P}_{x_0}(\tau_D>s)\D{s}\right)\frac{\displaystyle \sup_{x\in \partial D}\varphi_j(x)}{\displaystyle \sup_{x\in D}\varphi_j(x)}.
\end{split}
\end{equation}
\end{lem}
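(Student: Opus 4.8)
The plan is to start from the eigenfunction identity \eqref{eq:RBM_eigenfunction} applied at the point $x_0$ where $\varphi_j$ attains its supremum over $D$, and split the expectation according to whether the RBM has exited $D$ by time $t$. First I would establish the existence of $x_0$: since $\varphi_j$ extends continuously to the compact set $\overline{D}$, it attains its maximum there, and that maximum equals $\sup_{x\in D}\varphi_j(x)$ because $D$ is dense in $\overline{D}$ and $\varphi_j$ is continuous. Write $M:=\sup_{x\in D}\varphi_j(x)=\varphi_j(x_0)$ and $m:=\sup_{x\in\partial D}\varphi_j(x)$; by Lemma \ref{lem:Pleijel} we have $M>0$ (and in fact $M\ge m$), so dividing by $M$ is legitimate and the target inequality \eqref{eq:Steinerberger_lem} is exactly what we get after normalizing.

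The core computation is to rewrite $e^{\mu_j t}\varphi_j(x_0)=e^{\mu_j t}\mathbb{E}_{x_0}[\varphi_j(X_t)]$ by conditioning on $\{\tau_D>t\}$ versus $\{\tau_D\le t\}$. On the event $\{\tau_D>t\}$ we bound $\varphi_j(X_t)\le M$ trivially. On the event $\{\tau_D\le t\}$, the idea borrowed from Steinerberger is to use the strong Markov property at time $\tau_D$: since $X_{\tau_D}\in\partial D$, we have $\mathbb{E}_{x_0}[\varphi_j(X_t)\mathbf{1}_{\tau_D\le t}]=\mathbb{E}_{x_0}\big[\mathbf{1}_{\tau_D\le t}\,e^{-\mu_j(t-\tau_D)}\varphi_j(X_{\tau_D})\big]$ again by \eqref{eq:RBM_eigenfunction} applied from the boundary point $X_{\tau_D}$, and then bound $\varphi_j(X_{\tau_D})\le m$. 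This gives
\[
e^{\mu_j t}M \le e^{\mu_j t}\mathbb{P}_{x_0}(\tau_D>t)\,M + m\,\mathbb{E}_{x_0}\!\big[e^{\mu_j\tau_D}\mathbf{1}_{\tau_D\le t}\big].
\]
The remaining step is to express $\mathbb{E}_{x_0}[e^{\mu_j\tau_D}\mathbf{1}_{\tau_D\le t}]$ in terms of the tail $\mathbb{P}_{x_0}(\tau_D>s)$: using $e^{\mu_j\tau_D}=1+\int_0^{\tau_D}\mu_j e^{\mu_j s}\,\D s$ and Fubini, one finds $\mathbb{E}_{x_0}[e^{\mu_j\tau_D}\mathbf{1}_{\tau_D\le t}]=\mathbb{P}_{x_0}(\tau_D\le t)+\int_0^t\mu_j e^{\mu_j s}\mathbb{P}_{x_0}(s<\tau_D\le t)\,\D s\le 1+\int_0^t\mu_j e^{\mu_j s}\mathbb{P}_{x_0}(\tau_D>s)\,\D s$. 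Substituting this, dividing through by $M$, and rearranging the $m/M$ terms produces precisely \eqref{eq:Steinerberger_lem}.

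The main obstacle, and the place requiring the most care, is the application of the strong Markov property together with the eigenfunction identity at the (random) boundary point $X_{\tau_D}$. One needs $\mathbb{E}_x[\varphi_j(X_s)]=e^{-\mu_j s}\varphi_j(x)$ to hold for all $x\in\overline{D}$ including boundary points — which is exactly the content of \eqref{eq:RBM_eigenfunction} as stated, valid on $\overline{D}$ thanks to the continuity of the Neumann heat kernel up to the boundary for Lipschitz $D$ — and one needs the process to be a continuous strong Markov process on $\overline{D}$, which is the Bass–Hsu construction cited in Section \ref{sec:RBM}. A secondary technical point is justifying the measurability and the Fubini interchange for $\mathbb{E}_{x_0}[e^{\mu_j\tau_D}\mathbf{1}_{\tau_D\le t}]$; since the integrand is nonnegative and everything is restricted to the bounded time interval $[0,t]$, Tonelli applies without difficulty. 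Everything else is elementary manipulation of the resulting inequality.
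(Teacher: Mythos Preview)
Your approach is the same as the paper's --- split $\mathbb{E}_{x_0}[\varphi_j(X_t)]$ at $\tau_D$, apply the strong Markov property on $\{\tau_D\le t\}$, then rewrite $\mathbb{E}_{x_0}[e^{\mu_j\tau_D}\mathbf{1}_{\tau_D\le t}]$ in terms of the tail probabilities --- but the last step contains a gap that prevents you from reaching \eqref{eq:Steinerberger_lem} as stated.

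You correctly derive the \emph{identity}
\[
\mathbb{E}_{x_0}\!\big[e^{\mu_j\tau_D}\mathbf{1}_{\tau_D\le t}\big]
=\mathbb{P}_{x_0}(\tau_D\le t)+\int_0^t\mu_j e^{\mu_j s}\,\mathbb{P}_{x_0}(s<\tau_D\le t)\D{s},
\]
but then you throw information away by bounding $\mathbb{P}_{x_0}(\tau_D\le t)\le 1$ and $\mathbb{P}_{x_0}(s<\tau_D\le t)\le\mathbb{P}_{x_0}(\tau_D>s)$. After substituting this \emph{inequality} and dividing by $M$ you obtain
\[
1\le e^{\mu_j t}\mathbb{P}_{x_0}(\tau_D>t)+\frac{m}{M}\Big(1+\int_0^t\mu_j e^{\mu_j s}\mathbb{P}_{x_0}(\tau_D>s)\D{s}\Big),
\]
which is \emph{not} \eqref{eq:Steinerberger_lem}: the factor $(1-m/M)$ in front of $e^{\mu_j t}\mathbb{P}_{x_0}(\tau_D>t)$ is missing. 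There is no ``rearranging of the $m/M$ terms'' that recovers it. Worse, the substitution of an upper bound for $\mathbb{E}_{x_0}[e^{\mu_j\tau_D}\mathbf{1}_{\tau_D\le t}]$ into the term $m\cdot\mathbb{E}_{x_0}[\cdots]$ is only valid when $m\ge 0$, which the lemma does not assume for general $j$. The fix is immediate: keep the exact identity. Writing $\mathbb{P}_{x_0}(\tau_D\le t)=1-\mathbb{P}_{x_0}(\tau_D>t)$ and $\mathbb{P}_{x_0}(s<\tau_D\le t)=\mathbb{P}_{x_0}(\tau_D>s)-\mathbb{P}_{x_0}(\tau_D>t)$ in your own display and integrating $\int_0^t\mu_j e^{\mu_j s}\D{s}=e^{\mu_j t}-1$ gives
\[
\mathbb{E}_{x_0}\!\big[e^{\mu_j\tau_D}\mathbf{1}_{\tau_D\le t}\big]=1-e^{\mu_j t}\mathbb{P}_{x_0}(\tau_D>t)+\int_0^t\mu_j e^{\mu_j s}\mathbb{P}_{x_0}(\tau_D>s)\D{s},
\]
which is exactly the paper's \eqref{eq:indicator_id}; substituting this equality (valid regardless of the sign of $m$) and collecting the $\mathbb{P}_{x_0}(\tau_D>t)$ terms now yields \eqref{eq:Steinerberger_lem} on the nose.

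Two minor points. First, your displayed inequality has $e^{\mu_j t}M$ on the left; from $\varphi_j(x_0)=e^{\mu_j t}\mathbb{E}_{x_0}[\varphi_j(X_t)]$ the left side should be $M$. Second, you invoke Lemma~\ref{lem:Pleijel} to get $M>0$, but that lemma is stated only for $j=2$; for general $j\ge 2$ the positivity of $M=\sup_D\varphi_j$ follows instead from $\int_D\varphi_j=0$ (orthogonality to $\varphi_1$), and for $j=1$ it is trivial.
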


\begin{proof}[Proof of Lemma \ref{lem:Steinerberger_1}]
For any $t\geq 0$ and $x\in\overline{D}$, we can rewrite the eigenfunction identity \eqref{eq:RBM_eigenfunction} as 
\begin{equation}\label{eq:split}
e^{-\mu_j t}\varphi_j(x)=\mathbb{E}_x\left[\varphi_j(X_t)\mathbbm{1}_{\{\tau_D>t\}}\right]+\mathbb{E}_x\left[\varphi_j(X_t)\mathbbm{1}_{\{\tau_D\leq t\}}\right].
\end{equation}
We estimate the first term on the right-hand side of \eqref{eq:split} by
\begin{equation}\label{eq:split_1}
\mathbb{E}_x\left[\varphi_j(X_t)\mathbbm{1}_{\{\tau_D> t\}}\right]\leq \sup_{y\in D}\varphi_j(y)\mathbb{P}_x(\tau_D>t).
\end{equation}
For the second term, we use the strong Markov property, identity \eqref{eq:RBM_eigenfunction}, and the fact that $X_{\tau_D}\in\partial D$ $\mathbb{P}_x$-almost surely to obtain
\begin{align}
\mathbb{E}_x\left[\varphi_j(X_t)\mathbbm{1}_{\{\tau_D\leq t\}}\right] & =\mathbb{E}_x\left[\mathbb{E}_{X_{\tau_D}}\left[\varphi_j(X_{t-\tau_D})\right]\mathbbm{1}_{\{\tau_D\leq t\}}\right]\nonumber \\
 & =\mathbb{E}_x\left[e^{-\mu_j(t-\tau_D)}\varphi_j(X_{\tau_D})\mathbbm{1}_{\{\tau_D\leq t\}}\right]\nonumber \\
 & \leq \sup_{y\in\partial D}\varphi_j(y) e^{-\mu_j t}\mathbb{E}_x\left[e^{\mu_j\tau_D}\mathbbm{1}_{\{\tau_D\leq t\}}\right].\label{eq:split_2}
\end{align}

Substituting both \eqref{eq:split_1} and \eqref{eq:split_2} into \eqref{eq:split} leads to the inequality
\begin{equation}\label{eq:split_next}
\varphi_j(x)\leq\sup_{y\in D}\varphi_j(y) e^{\mu_j t}\mathbb{P}_x(\tau_D>t)+\sup_{y\in\partial D}\varphi_j(y) \mathbb{E}_x\left[e^{\mu_j\tau_D}\mathbbm{1}_{\{\tau_D\leq t\}}\right],
\end{equation}
which holds for any $t\geq 0$ and $x\in\overline{D}$. Since $\varphi_j$ is continuous on the compact set $\overline{D}$, there exists some $x_0\in\overline{D}$ such that $\varphi_j(x_0)=\sup_{y\in D}\varphi_j(y)$. Hence setting $x=x_0$ in \eqref{eq:split_next} and then dividing both sides by $\sup_{y\in D}\varphi_j(y)>0$ gives us
\begin{equation}\label{eq:Steinerberger_1}
1\leq e^{\mu_j t}\mathbb{P}_{x_0}(\tau_D>t)+\mathbb{E}_{x_0}\left[e^{\mu_j\tau_D}\mathbbm{1}_{\{\tau_D\leq t\}}\right]\frac{\displaystyle\sup_{y\in \partial D}\varphi_j(y)}{\displaystyle\sup_{y\in D}\varphi_j(y)}.
\end{equation}

Next we find an alternative expression for the expectation appearing on the right-hand side of \eqref{eq:Steinerberger_1} that will be more amenable to estimation later on in the proof of Theorem \ref{thm:main}. For any $t\geq 0$ and $x\in\overline{D}$, we have $\mathbb{P}_x$-almost surely 
\begin{align*}
\int_0^t\mu_j e^{\mu_j s}\mathbbm{1}_{\{\tau_D>s\}}\D{s}&=\int_0^{\tau_D\wedge t}\mu_j e^{\mu_j s}\D{s}\\
&=e^{\mu_j(\tau_D\wedge t)}-1\\
&=e^{\mu_j\tau_D}\mathbbm{1}_{\{\tau_D\leq t\}}+e^{\mu_j t}\mathbbm{1}_{\{\tau_D>t\}}-1.
\end{align*}
Taking expectations and using Tonelli's theorem on the left-hand side leads to 
\[
\int_0^t\mu_j e^{\mu_j s}\mathbb{P}_x(\tau_D>s)\D{s}=\mathbb{E}_x\left[e^{\mu_j\tau_D}\mathbbm{1}_{\{\tau_D\leq t\}}\right]+e^{\mu_j t}\mathbb{P}_x(\tau_D>t)-1.
\]
All of these terms are finite so we can rearrange this equation to yield
\begin{equation}\label{eq:indicator_id}
\mathbb{E}_x\left[e^{\mu_j\tau_D}\mathbbm{1}_{\{\tau_D\leq t\}}\right]=1-e^{\mu_j t}\mathbb{P}_x(\tau_D>t)+\int_0^t\mu_j e^{\mu_j s}\mathbb{P}_x(\tau_D>s)\D{s}.
\end{equation}

Finally, we set $x=x_0$ in \eqref{eq:indicator_id}, substitute this into \eqref{eq:Steinerberger_1}, and then collect the $\mathbb{P}_{x_0}(\tau_D>t)$ terms together to produce the desired inequality
\begin{align*}
1&\leq e^{\mu_j t}\mathbb{P}_{x_0}(\tau_D>t)\left(1-\frac{\displaystyle \sup_{y\in\partial D}\varphi_j(y)}{\displaystyle \sup_{y\in D}\varphi_j(y)}\right)\\
&~~~+\left(1+\int_0^t\mu_j e^{\mu_j s}\mathbb{P}_{x_0}(\tau_D>s)\D{s}\right)\frac{\displaystyle \sup_{y\in\partial D}\varphi_j(y)}{\displaystyle \sup_{y\in D}\varphi_j(y)}.
\end{align*}

\end{proof}


\section{Proofs of the main results}\label{sec:main_proofs}


\subsection{Proof of Theorem \ref{thm:iff_condition}}

\begin{proof}[Proof of Theorem \ref{thm:iff_condition}]
First suppose that $\mathfrak{C}(D)\leq 1$. In light of \eqref{eq:well_ratio}, this is  actually equivalent to $\mathfrak{C}(D)=1$. Hence by \eqref{eq:domain_constant} and Remark \ref{rem:sup_inf}, if $\varphi_2$ is a Neumann Laplacian eigenfunction corresponding to $\mu_2$, then both
\begin{equation}\label{eq:implied_inequality}
\frac{\displaystyle\sup_{x\in D}\varphi_2(x)}{\displaystyle\sup_{x\in\partial D}\varphi_2(x)}\leq 1~~~~\text{ and }~~~~
\frac{\displaystyle\inf_{x\in D}\varphi_2(x)}{\displaystyle\inf_{x\in\partial D}\varphi_2(x)}\leq 1
\end{equation}
must hold. By Lemma \ref{lem:Pleijel}, we know that $\varphi_2$ takes positive and negative values on $\partial D$, so $\sup_{x\in\partial D}\varphi_2(x)>0$ and $\inf_{x\in\partial D}\varphi_2(x)<0$. Thus \eqref{eq:implied_inequality} implies that
\begin{equation}\label{eq:HS2_inequality}
\sup_{x\in D}\varphi_2(x)\leq\sup_{x\in\partial D}\varphi_2(x)~~~~\text{ and }~~~~\inf_{x\in D}\varphi_2(x)\geq\inf_{x\in\partial D}\varphi_2(x).
\end{equation}
Since \eqref{eq:HS2_inequality} holds for any $\varphi_2$, it follows from Definition \ref{def:HS2} that HS2 holds for $D$. 

Conversely, suppose that HS2 holds for $D$. Then by Definition \ref{def:HS2} we have 
\[
\sup_{x\in D}\varphi_2(x)\leq \sup_{x\in\partial D}\varphi_2(x),
\]
which by Lemma \ref{lem:Pleijel} implies
\begin{equation}\label{eq:phi_ratio}
\frac{\displaystyle\sup_{x\in D}\varphi_2(x)}{\displaystyle\sup_{x\in\partial D}\varphi_2(x)}\leq 1.
\end{equation}
Since \eqref{eq:phi_ratio} holds for any $\varphi_2$, it follows from \eqref{eq:domain_constant} that $\mathfrak{C}(D)\leq 1$.

Next suppose that we have a class of domains $\mathcal{D}$ with $\mathfrak{C}(\mathcal{D})\leq 1$. It follows from \eqref{eq:class_constant} that $\mathfrak{C}(D)\leq 1$ for each $D\in\mathcal{D}$. We have already shown that this implies HS2 holds for each $D\in\mathcal{D}$. Hence by Definition \ref{def:HS2}, HS2 holds for $\mathcal{D}$. Conversely, suppose that HS2 holds for $\mathcal{D}$. Hence HS2 holds for each $D\in\mathcal{D}$. We have already shown that this implies $\mathfrak{C}(D)\leq 1$ for each $D\in\mathcal{D}$. Now it follows from \eqref{eq:class_constant} that $\mathfrak{C}(\mathcal{D})\leq 1$.
\end{proof}


\subsection{Proof of Theorem \ref{thm:main}}

\begin{proof}[Proof of Theorem \ref{thm:main}]
In order to prove \eqref{eq:general_formula}, we verify that 
\begin{equation}\label{eq:verify}
\mathfrak{C}(D)\leq\inf_{(\epsilon,a)\in A_{\dim D}}e^{r(\dim D)a}\left(1+\frac{r(\dim D)V(\epsilon,\dim D)}{1-\epsilon-r(\dim D)}e^{-(1-\epsilon)a}\right)
\end{equation}
holds for every $D\in\mathcal{D}$. Towards this end, let $D$ be an arbitrary domain in $\mathcal{D}$ and put $d=\dim D\geq 2$. In what follows, the eigenvalues $\lambda_1$ and $\mu_2$ always pertain to $D$. 

Our starting point is inequality \eqref{eq:Steinerberger_lem} of Lemma \ref{lem:Steinerberger_1} applied to any Neumann Laplacian eigenfunction $\varphi_2$ corresponding to $\mu_2$. In this case \eqref{eq:well_ratio} holds so we can substitute upper bounds for $\mathbb{P}_{x_0}(\tau_D>t)$ and $\mathbb{P}_{x_0}(\tau_D>s)$ into \eqref{eq:Steinerberger_lem} while still preserving the inequality. For this we use the $V$-bound corresponding to $V$ which is valid for $D$ by hypothesis; see Definition \ref{def:Vogt_bound}. Since this is typically a poor bound for small $s$, we split the integral into two parts and then use the trivial upper bound of $1$ when $s$ is between $0$ and $u$, with $u\in [0,t]$ being a parameter. So for any $\delta,\epsilon\in (0,1-r(d))$ and $t\geq u\geq 0$, we can write
\begin{align*}
1\leq &V(\delta,d)e^{-\big((1-\delta)\lambda_1-\mu_2\big) t}\left(1-\frac{\displaystyle \sup_{x\in\partial D}\varphi_2(x)}{\displaystyle \sup_{x\in D}\varphi_2(x)}\right)\\
&+\left(e^{\mu_2 u}+\mu_2 V(\epsilon,d)\int_u^t e^{-\big((1-\epsilon)\lambda_1-\mu_2\big) s}\D{s}\right)\frac{\displaystyle \sup_{x\in\partial D}\varphi_2(x)}{\displaystyle \sup_{x\in D}\varphi_2(x)}.
\end{align*}
The boundedness assumption on $D$ implies that $\lambda_1>0$, so we can set $u=\frac{a}{\lambda_1}$ and $t=\frac{b}{\lambda_1}$ for any $b\geq a\geq 0$ to get
\begin{align*}
1\leq &V(\delta,d)e^{-\left(1-\delta-\frac{\mu_2}{\lambda_1}\right) b}\left(1-\frac{\displaystyle \sup_{x\in\partial D}\varphi_2(x)}{\displaystyle \sup_{x\in D}\varphi_2(x)}\right)\\
&+\left(e^{\frac{\mu_2}{\lambda_1} a}+\mu_2 V(\epsilon,d)\int_\frac{a}{\lambda_1}^\frac{b}{\lambda_1} e^{-\big((1-\epsilon)\lambda_1-\mu_2\big) s}\D{s}\right)\frac{\displaystyle \sup_{x\in\partial D}\varphi_2(x)}{\displaystyle \sup_{x\in D}\varphi_2(x)}.
\end{align*}
Applying the change of variables $s\mapsto\frac{s}{\lambda_1}$ to the integral results in
\begin{equation}\label{eq:pre_B}
\begin{split}
1\leq &V(\delta,d)e^{-\left(1-\delta-\frac{\mu_2}{\lambda_1}\right) b}\left(1-\frac{\displaystyle \sup_{x\in\partial D}\varphi_2(x)}{\displaystyle \sup_{x\in D}\varphi_2(x)}\right)\\
&+\left(e^{\frac{\mu_2}{\lambda_1} a}+\frac{\mu_2}{\lambda_1} V(\epsilon,d)\int_a^b e^{-\left(1-\epsilon-\frac{\mu_2}{\lambda_1}\right) s}\D{s}\right)\frac{\displaystyle \sup_{x\in\partial D}\varphi_2(x)}{\displaystyle \sup_{x\in D}\varphi_2(x)}.
\end{split}
\end{equation}

Now it is obvious that the ratio upper bound $r(d)$ can be substituted for the four instances of $\frac{\mu_2}{\lambda_1}$ that appear in \eqref{eq:pre_B} while still preserving the inequality. To shorten what would otherwise be an exceedingly lengthy expression, we set
\[
\rho(\delta,d):=1-\delta-r(d)
\]
and define $\rho(\epsilon,d)$ analogously. We revert back to the original notation at the end of the proof. Noting that $\rho(\epsilon,d)\neq 0$, we can also evaluate the integral in \eqref{eq:pre_B} to obtain
\begin{align*}
1\leq &V(\delta,d)e^{-\rho(\delta,d) b}\left(1-\frac{\displaystyle \sup_{x\in\partial D}\varphi_2(x)}{\displaystyle \sup_{x\in D}\varphi_2(x)}\right)\\
&+\left(e^{r(d)a}+\frac{r(d)V(\epsilon,d)}{\rho(\epsilon,d)}\left(e^{-\rho(\epsilon,d)a}-e^{-\rho(\epsilon,d)b}\right)\right)\frac{\displaystyle \sup_{x\in\partial D}\varphi_2(x)}{\displaystyle \sup_{x\in D}\varphi_2(x)}.
\end{align*}
Again noting that \eqref{eq:well_ratio} holds, this inequality can be rearranged into
\begin{equation}\label{eq:pre_condition}
\begin{split}
&\left(1-V(\delta,d)e^{-\rho(\delta,d) b}\right)\frac{\displaystyle \sup_{x\in D}\varphi_2(x)}{\displaystyle \sup_{x\in\partial D}\varphi_2(x)}\\
&\leq\left(e^{r(d)a}-V(\delta,d)e^{-\rho(\delta,d) b}+\frac{r(d)V(\epsilon,d)}{\rho(\epsilon,d)}\left(e^{-\rho(\epsilon,d)a}-e^{-\rho(\epsilon,d)b}\right)\right).
\end{split}
\end{equation}
Since $\rho(\delta,d)>0$, for $b\geq 0$ large enough we have
\begin{equation}\label{eq:constraint}
1-V(\delta,d)e^{-\rho(\delta,d) b}>0,
\end{equation}
and this allows us to transform \eqref{eq:pre_condition} into the inequality
\begin{equation}\label{eq:prelim_inequality}
\frac{\displaystyle \sup_{x\in D}\varphi_2(x)}{\displaystyle \sup_{x\in\partial D}\varphi_2(x)}\leq\frac{e^{r(d)a}-V(\delta,d)e^{-\rho(\delta,d) b}+\frac{r(d)V(\epsilon,d)}{\rho(\epsilon,d)}\left(e^{-\rho(\epsilon,d)a}-e^{-\rho(\epsilon,d)b}\right)}{1-V(\delta,d)e^{-\rho(\delta,d) b}}.
\end{equation}

In principle, we could try to optimize inequality \eqref{eq:prelim_inequality} over $\delta,\epsilon\in (0,1-r(d))$ and $b\geq a\geq 0$ satisfying the constraint \eqref{eq:constraint}. In practice, however, the optimal value of $b$ is always so large as to suggest letting $b\to \infty$. This leads to a much simpler inequality that is nearly as good as the original. More specifically, since $\rho(\delta,d)=1-\delta-r(d)$ and $\rho(\epsilon,d)=1-\epsilon-r(d)$ are both positive, letting $b\to \infty$ in \eqref{eq:prelim_inequality} results in 
\[
\frac{\displaystyle \sup_{x\in D}\varphi_2(x)}{\displaystyle \sup_{x\in\partial D}\varphi_2(x)}\leq e^{r(d)a}\left(1+\frac{r(d)V(\epsilon,d)}{1-\epsilon-r(d)}e^{-(1-\epsilon)a}\right),
\]
which holds for any $(\epsilon,a)\in A_d$ and any Neumann Laplacian eigenfunction $\varphi_2$ corresponding to $\mu_2$. Thus we have verified that \eqref{eq:verify} holds for $D$ as desired.
\end{proof}


\subsection{Proof of Theorem \ref{thm:asymptotic_bound}}

\begin{proof}[Proof of Theorem \ref{thm:asymptotic_bound}]
We start by using Theorem \ref{thm:main} with a suitable $V$-function $V$ and ratio upper bound $r$ to write 
\begin{equation}\label{eq:asymptotic_bound}
\mathfrak{C}_d\leq e^{r(d)a}\left(1+\frac{r(d)V(\epsilon,d)}{1-\epsilon-r(d)}e^{-(1-\epsilon)a}\right),
\end{equation}
which holds for any $\epsilon\in (0,1-r(d))$ and $a\geq 0$ when $d\geq 2$. We take as a $V$-function
\[
V(\epsilon,d):=2^{1/4}\left(\frac{1+1/\sqrt{\epsilon}}{2}\right)^{d/2}
\]
from \eqref{eq:Vogt_func}, and use Lemma \ref{lem:ratio_bound} to justify the choice of
\[
r(d):=\frac{4}{d}>\frac{4d+8}{d(d+8)}
\]
for a ratio upper bound when $d$ is large enough so that $r(d)<1$.

Next we make specific choices for $\epsilon$ and $a$ as functions of the dimension $d$. Our choices are of a simple form, yet incorporate some flexibility in order to get the sharpest result. More specifically, we take
\[
\epsilon=\epsilon_d:=\left(1+c d^\alpha\right)^{-2}
\]
and
\[
a=a_d:=k d^\beta.
\]
Clearly we must have $c,k>0$, while any $\beta\in\mathbb{R}$ is valid. Moreover, the table from Corollary \ref{cor:table} suggests that we choose $\alpha<0$ and $\beta>0$. However, in order to determine the precise ranges of $c$ and $\alpha$ that ensure $\epsilon_d\in (0,1-r(d))$, we need to examine the asymptotic behavior of $\epsilon_d$ as $d\to\infty$. This can be deduced from the Taylor expansion of $(1+x)^{-2}$ at $x=0$, namely 
\begin{equation}\label{eq:epsilon_d}
\epsilon_d=1-2cd^\alpha+O(d^{2\alpha})\text{ as }d\to\infty.
\end{equation}
Hence if $\alpha\in (-1,0)$, or if $\alpha=-1$ and $c>2$, then $\epsilon_d\in(0,1-\frac{4}{d})$ for $d$ large enough. 

We will also need the asymptotic behavior of $V(\epsilon_d,d)$ as $d\to\infty$. For this we can use the Taylor expansion of $\log(1+x)$ at $x=0$ to write
\begin{align}
V(\epsilon_d,d)&=2^{1/4}\left(1+\frac{c}{2}d^\alpha\right)^{d/2}\nonumber \\
&=2^{1/4}\exp\left(\frac{d}{2}\log\left(1+\frac{c}{2}d^\alpha\right)\right)\nonumber \\
&=2^{1/4}\exp\left(\frac{d}{2}\left(\frac{c}{2}d^\alpha+O(d^{2\alpha})\right)\right)\text{ as }d\to\infty\nonumber \\
&=2^{1/4}\exp\left(\frac{c}{4}d^{\alpha+1}+O(d^{2\alpha+1})\right)\text{ as }d\to\infty.\label{eq:binomial_power}
\end{align}

Now we substitute the asymptotics \eqref{eq:epsilon_d} and \eqref{eq:binomial_power} into \eqref{eq:asymptotic_bound} under the restrictions $\alpha\in (-1,0)$ and $c,k,\beta>0$ to yield
\begin{equation*}
\mathfrak{C}_d\leq e^{4kd^{\beta-1}}\left(1+\frac{\frac{4}{d}2^{1/4}e^{\frac{c}{4}d^{\alpha+1}+O(d^{2\alpha+1})}}{2cd^\alpha-\frac{4}{d}+O(d^{2\alpha})}e^{-2ckd^{\alpha+\beta}+O(d^{2\alpha+\beta})}\right)\text{ as }d\to\infty.
\end{equation*}
Since the expression appearing within the large parentheses is bounded below by $1$ for $d$ large enough, if $\beta>1$ then the first exponential factor will grow unboundedly with $d$ and render the estimate meaningless. On the other hand, if $\beta<1$, then the $\frac{c}{4}d^{\alpha+1}$ term will dominate the $-2ckd^{\alpha+\beta}$ term inside the exponential, also leading to an estimate that grows unboundedly with $d$. Hence we must have $\beta=1$. With this parameter choice we can now write 
\[
\mathfrak{C}_d\leq e^{4k}\left(1+\frac{2^{9/4}e^{c(\frac{1}{4}-2k)d^{\alpha+1}+O(d^{2\alpha+1})}}{2cd^{\alpha+1}-4+O(d^{2\alpha+1})}\right)\text{ as }d\to\infty.
\]

Noting again that the expression appearing within the large parentheses is bounded below by $1$ for $d$ large enough, we see that choosing $k=\frac{1}{8}$ along with any $c>0$ and $\alpha\in (-1,-\frac{1}{2}]$ will lead to the best possible asymptotic bound that can be deduced from \eqref{eq:asymptotic_bound} and our choices of $V$-function and ratio upper bound. In this case we have
\begin{align*}
\limsup_{d\to\infty}\mathfrak{C}_d&\leq \lim_{d\to\infty}\sqrt{e}\left(1+\frac{2^{9/4}e^{O(d^{2\alpha+1})}}{2cd^{\alpha+1}-4+O(d^{2\alpha+1})}\right)\\
&=\sqrt{e}.
\end{align*}

\end{proof}


\bibliography{hotspots_bib}
\bibliographystyle{alphabbrev}

\end{document}